\documentclass[11pt]{amsart}%fleqn
\usepackage{fullpage}

\usepackage{amsmath}
\usepackage{amssymb}
\usepackage{amsfonts}
\usepackage{graphicx}
\usepackage{amsthm}
\usepackage{enumerate}
\usepackage{lscape}
\usepackage{dsfont}
\usepackage{color}
\usepackage{mathtools}
\usepackage[dvipsnames]{xcolor}
\usepackage{hyperref}
\usepackage{appendix}

\newcounter{example}
\newenvironment{example}[1][]{\refstepcounter{example}\par\medskip
   \noindent \textbf{Example~\theexample. (#1)} \rmfamily}{\medskip}    

\usepackage[utf8]{inputenc}

\newcommand{\C}{\mathds C}
\newcommand{\R}{\mathds R}

\newcommand{\Vol}{\operatorname {Vol}}

\newcommand{\CP}{\mathds C {\rm P}}

\usepackage{apptools}

\newtheorem{theorem}{Theorem}

\newtheorem{lemma}{Lemma}
\newtheorem{proposition}{Proposition}
\newtheorem{corollary}{Corollary}

\newtheorem{remark}{Remark}

\numberwithin{equation}{section}

\def\subsubsection{\@startsection{subsubsection}{3}%
\z@{.5\linespacing\@plus.7\linespacing}{-.5em}%
{\normalfont\bfseries}}
\makeatother

\title{Geometric quantization and TYCZ coefficients of hyperK\"ahler $4$-manifolds}

\author{Simone Cristofori}

\subjclass[2020]{32Q20, 58C05, 58C25}%53C07, 53C42}
\keywords{HyperK\"ahler manifolds, geometric quantization, $\varepsilon$-function, TYCZ expansion}

\begin{document}

\maketitle

\begin{abstract}
In this paper we study conditions for the existence of a geometric quantization of a complete hyperK\"ahler four manifold with an effective tri--Hamiltonian circle action and we show that the vanishing of the third coefficient $a_3$ associated to these metrics is equivalent to flatness.
\end{abstract}

\section{Introduction}
Let $M$ be a $n$-dimensional complex manifold endowed with a K\"ahler metric $g$ and assume that there exists a \emph{polarization} of $M$, namely a holomorphic line bundle $L\rightarrow M$ over $M$ whose first Chern class satisfies 
\[
c_1(L)=\bigg[\frac{\omega}{2\pi}\bigg],
\]
where $\omega$ is the  fundamental form associated to $g$. This condition is equivalent to require the K\"ahler form $\omega/2\pi$ to be integral. Given a \emph{polarized} K\"ahler metric, one can define an hermitian product $h$ on $L$ with curvature form ${\rm Ric}(h):=i\Theta_h$ equal to $\omega$, where $\Theta_h$ is the curvature of the Chern connection of the hermitian holomorphic line bundle $(L,h)$.
For each positive integer $m\in\mathds Z^+$, the hermitian metric $h$ on $L$ induces an hermitian metric $h_m$ on the $m$-th tensor power $L^m:=L^{\otimes m}$ that satisfies ${\rm Ric}(h_m)=m\omega$. Recall that ${\rm Ric}(h_m)$ is the $2$-form on $M$ whose local expression in a trivializing open set $U\subset M$ is given by:
$$
{\rm Ric}(h_m)=- i \partial\bar \partial \log h_m(\sigma(x),\sigma(x)),
$$
for a trivializing holomorphic section $\sigma:U\rightarrow L\setminus \{0\}$. The pair $(L^m,h_m)$ is called a \emph{geometric quantization} of the K\"ahler manifold $(M,m\omega)$.

Consider the separable complex Hilbert space $\mathcal H_m$ of global holomorphic sections $s$ of $L^m$ that are $L^2$-limited in norm with respect to the product:
$$
\langle s,s\rangle_{m}:=\int_Mh_m(s(x),s(x))\frac{\omega^n}{n!}.
$$
Let $\{s_j\}_j$, $j=0,\dots, d_m$ (where $\dim \mathcal H_m=d_m+1$) be an orthonormal basis of $\mathcal H_m$. We observe that when $M$ is compact, $\mathcal H_m$ reduces to the space $H^0(L^m)$ of global holomorphic sections of $L^m$ which has $0<\dim H^0(L^m)<\infty$ for $m$ sufficiently large. However in the noncompact case, $\mathcal H_m$ could reduce to be $\{0\}$ or its dimension could be infinite.
When $\mathcal H_m\neq \{0\}$, we can 
%pick an orthonormal basis $\{s_j\}$ of $\mathcal H_m$ and 
define the following smooth function:
$$
\varepsilon_{m g}(x):=\sum_{j=0}^{d_m}h_m(s_j(x),s_j(x)).
$$ 
This function is globally defined on $M$ and it depends only on the K\"ahler metric $mg$ and not on the orthonormal basis chosen or on the hermitian metric (see e.g. \cite{cgr1}), and in literature it also appears under the name of {\em distortion function}.

When $M$ is compact, D. Catlin \cite{catlin} and independently S. Zelditch \cite{zelditch} proved that the function $\varepsilon_{m g}$ admits an asymptotic expansion in the $C^\infty$ category, known as Tian-Yau-Catlin-Zelditch (from now on TYCZ) expansion:
\begin{equation}\label{eq:sviluppo asintotico}
\epsilon_{m g}(x)\sim\sum_{j=0}^\infty a_j(x) \bigg(\frac{m}{2\pi}\bigg)^{n-j},
\end{equation}
that is for every pair of nonnegative integers $l$, $r$, there exists a constant $C(l,r)>0$ such that:
\begin{equation}\label{asympt}
\left|\left|\epsilon_{m g}(x)-\sum_{j=0}^l a_j(x) \bigg(\frac{m}{2\pi}\bigg)^{n-j} \right|\right|_{C^r}\le C(l,r)\bigg(\frac{m}{2\pi}\bigg)^{n-l-1}.
\end{equation}
Here $a_0(x)\equiv 1$ and the $a_j(x)$, $j=1,2,\dots$ are smooth functions on $M$.
In \cite{lu} Z. Lu, by
means of Tian’s peak section method, computed the first three coefficients of this expansion and he proved that each of the coefficients $a_j(x)$ is a polynomial of the curvature and its covariant derivatives of the metric $g$ which can be determined by finitely many algebraic operations. 

When $M$ is noncompact, we say that a TYCZ--expansion \eqref{eq:sviluppo asintotico} exists if \eqref{asympt} holds for any compact subset of $K\subset M$, as introduced in \cite{arezzoloi} by C. Arezzo and A. Loi. We observe that in this case, if the metric $g$ can be described by a global K\"ahler potential $\varphi:M\to\R$, i.e. $\omega=\frac{i}{2}\partial\overline\partial\varphi$ or the holomorphic line bundle $L$ that gives the quantization of $M$ is trivial, 
the construction make sense also for nonintegers values of $m$, that will be denoted by $\alpha\in \mathds R^+$. In this case the Hilbert space $\mathcal{H}_{\alpha}$ equals the weighted Hilbert
space $\mathcal{H}_{\alpha\varphi}$ consisting of square integrable holomorphic functions on $M$, with weight $e^{-\alpha\varphi}$,
namely
\[
\mathcal{H}_{\alpha\varphi}=\bigg\{f(z)\in Hol(M)\;|\;\int_X |f|^2 e^{-\alpha\varphi}\frac{\omega^{n}}{n!}<\infty\bigg\}.
\]
If $\mathcal{H}_{\alpha\varphi}\neq\{0\}$, we can take an orthonormal basis $\{f_j^\alpha\}$ so that the epsilon function reads as
\[
\epsilon_{\alpha g}(x)=e^{-\alpha\varphi(x)} K_{\alpha\varphi}(x,x),
\]
where $K_{\alpha\varphi}(x,y)$ is the reproducing kernel of $\mathcal{H}_{\alpha\varphi}$.
In constrast with the compact setting, the existence of an expansion of the epsilon function is not guaranteed. In \cite{engliscoeff} M. Engli\v{s} proved the existence of a TYCZ--expansion in the case of strongly pseudoconvex domains of $\mathds C^n$ with real analytic boundary, and  proved that the coefficients are the same as those computed by Lu for compact manifolds. A more general result can be obtained by the work of X. Ma and G. Marinescu \cite[Thm. 6.1.1]{mamarinescu}, as described in the sequel.

Studying metrics with the TYCZ coefficients being prescribed is a very natural generalization of the problem of finding K\"ahler metrics with constant scalar curvature on a K\"ahler manifold. 
 In this regard Z. Lu and G. Tian \cite{LT} proved that the PDEs $a_j = f$ (with $j \ge 2$ and $f$ a smooth function on $M$) are elliptic. They also showed that if the logterm of the Bergman and Szeg\"o kernel
of the unit disk bundle over $M$ vanishes then $a_k = 0$, for $k > n$ (where $n$ is the complex dimension of $M$). The study of these PDEs is meaningful independently of the existence of a TYCZ expansion. Thus, given any K\"ahler manifold $(M,g)$ it is natural to call the $a_j$’s the \emph{coefficients associated to metric $g$}. 
 
The vanishing of the coefficients $a_k$ for $k\ge n+1$ turns out to be related to some important problems in the theory of pseudoconvex manifolds (cf. \cite{LT,ALZ,LUZ}).

In the noncompact setting, one can find in \cite{LZmama} a characterization of the flat metric among locally hermitian symmetric spaces as the only one with vanishing $a_1$ and $a_2$, while in \cite{FT} Z. Feng and Z. Tu solve a conjecture formulated in \cite{Z} by showing that the complex hyperbolic space is the only Cartan-Hartogs domain where the coefficient $a_2$ is constant.

The first three coefficients of the asymptotic expansion of the $\epsilon$-function, computed by Z. Lu \cite{lu} for compact manifolds and by M. Engli\v s \cite{engliscoeff} for noncompact ones are:
\begin{equation}
\begin{cases} \label{coeffespan} a_1=-\frac12\sigma\\ a_2=-\frac13\Delta_g\sigma+\frac1{24}\left(||R||^2-4||{\rm Ric}||^2+3\sigma^2\right)\\
    \begin{split}
        a_3=&-\frac18\Delta_g\Delta_g\sigma+\frac1{24}{\rm div}{\rm div}(R,{\rm Ric})-\frac16{\rm div}({\rm div}(\sigma {\rm Ric}))+\frac1{48}\Delta_g(||R||^2-4||{\rm Ric}||^2+8\sigma^2)+\\
        &-\frac1{48}\sigma(\sigma^2+||R||^2-4||{\rm Ric}||^2){-}\frac1{24}(\sigma_3({\rm Ric})-{\rm Ric}(R,R){+}R({\rm Ric},{\rm Ric}))
    \end{split}
    \end{cases}
\end{equation}
where $R$ denotes the Riemannian tensor of $g$, given in local coordinates by:
\[
R_{i\overline jk\overline l}=\frac{\partial^2 g_{i\overline j}}{\partial z_k\partial\overline z_l} -\sum_{p,q=1}^n g^{p\overline q}\frac{\partial g_{i\overline q}}{\partial z_k}\frac{\partial g_{p\overline j}}{\partial\overline z_l},
\]
${\rm Ric}$ denotes the Ricci tensor, that (in contrast with Lu's notation that has the opposite sign) reads:
$$
{\rm Ric}_{i\bar j}=g^{l\bar m}R_{i\bar jl\bar m}
$$
and:
$$
\sigma=g^{i\bar j}{\rm Ric}_{i\bar j}
$$
is the scalar curvature. All the norms are taken with respect to $g$, and we further are using the following notations (here the signs have been changed accordingly to our notation):
\begin{equation}\label{lu}
    \begin{split}
        & |D'\sigma|^2=g^{j\overline{i}}\frac{\partial\sigma}{\partial z_i}\frac{\partial\sigma}{\partial \overline{z_j}},\qquad  |D'{\rm Ric}|^2=g^{\alpha\overline{i}}g^{j\overline{\beta}}g^{\gamma\overline{k}}Ric_{i\overline{j},k}  \overline{{\rm Ric}_{\alpha\overline{\beta},\gamma}},\\
        &|D'R|^2=g^{\alpha\overline{i}}g^{j\overline{\beta}}g^{\gamma\overline{k}}g^{l\overline{\delta}}g^{\epsilon\overline{p}}R_{i\overline{j}k\overline{l},p}\overline{R_{\alpha\overline{\beta}\gamma\overline{\delta},\epsilon}},\\
        &
        {\rm div}{\rm div}(\sigma {\rm Ric})=2|D'\sigma|^2+g^{\beta\overline{i}}g^{j\overline{\alpha}}{\rm Ric}_{i\overline{j}}\frac{\partial^2\sigma}{\partial z_{\alpha}\partial\overline{z_\beta}}+\sigma\Delta_g\sigma,\\
        &
           \sigma_3({\rm Ric})=g^{\delta\overline{i}}g^{j\overline{\alpha}}g^{\beta\overline{\gamma}}{\rm Ric}_{i\overline j}{\rm Ric}_{\alpha\overline\beta}{\rm Ric}_{\gamma\overline \delta},\\
        &
        R({\rm Ric},{\rm Ric})=g^{\alpha \overline{i}}g^{j\overline\beta}g^{\gamma \overline k}g^{l\overline\delta}R_{i\overline jk\overline l}{\rm Ric}_{\beta\overline{\alpha}}{\rm Ric}_{\delta\overline{\gamma}},\\
        &
        {\rm Ric}(R,R)=g^{\alpha \overline i}g^{j\overline\beta}g^{\gamma \overline k}g^{\delta \overline p}g^{q\overline\epsilon}{\rm Ric}_{i\overline j}R_{\beta\overline\gamma p \overline q}R_{k\overline\alpha\epsilon\overline\delta},\\
       &{\rm div}({\rm div}(R,{\rm Ric}))=-g^{\beta\overline{i}}g^{j\overline{\alpha}}Ric_{i\overline{j}}\frac{\partial^2\sigma}{\partial z_{\alpha}\partial\overline{z_\beta}}-2|D'{\rm Ric}|^2{-}g^{\alpha\overline{i}}g^{j\overline{\beta}}g^{\gamma\overline{k}}g^{l\overline{\delta}}{\rm Ric}_{i\overline{j},k\overline{l}}R_{\beta\overline{\alpha}\delta\overline{\gamma}}+\\
        &\qquad\qquad\qquad\quad\  -R({\rm Ric},{\rm Ric}){+}\sigma_3({\rm Ric}),
    \end{split}
\end{equation}
where $",p"$ represents the covariant derivative with respect to $\frac{\partial}{\partial z_p}$ and $\Delta$ represents the Laplace operator
\begin{equation}\label{laplace}
\Delta_g=\sum_{i,j=1}^n g^{i\overline j}\frac{\partial^2}{\partial z_i\partial\overline{z}_j}.
\end{equation}
In particular, for a K\"ahler--Einstein manifold $(M^n,g)$ with Einstein constant $\lambda$, we have
\[
{\rm Ric}=\lambda g,\qquad 
\sigma=n\lambda
\]
and, as shown in \cite{CZ}, the above coefficients simplify as
\begin{equation*}
\begin{cases}
  a_1=-\frac{n\lambda}{2}\\
  a_2=\frac1{24}\left(||R||^2+n\lambda^2(3n-4)\right)\\
  a_3=\frac1{48}\left(\Delta_g||R||^2-\lambda(n-2)(\lambda^2n(n-2)+||R||^2)\right).
\end{cases}
\end{equation*}

In this paper we study the family of Ricci-flat K\"ahler metrics constructed via the Gibbons--Hawking ansatz on the total space of a principal $S^1$-bundle $X$ over an open set $U\subset\R^3$ (usually $\R^3\backslash\{p_1,\dots,p_N\}$ with $N\le\infty$, called \emph{centers} of the construction) firstly introduced in \cite{GH}. Observe that these metrics generalize the Taub-NUT metrics on $\C^2$ (whose detailed description can be found in \cite{LB}) and the Eguchi--Hanson metric on the blow up of $\C^2$ at the origin (introduced in \cite{EH}). More specifically, this construction produces hyperK\"ahler manifolds of dimension $4$ which are invariant under a circle action.

The first result of this paper is the following, in which we study the existence of a geometric quantization for these metrics:

\begin{theorem}\label{th: quant intro}
    %The K\"ahler manifold $(X,\omega_k)$ for $k=1,2,3$ 
    Let $(X,\omega_1,\omega_2,\omega_3)$ be a connected complete hyperK\"ahler four-manifold with an effective tri-Hamiltonian circle action. For each $l\in\{1,2,3\}$, the K\"ahler manifold $(X,\omega_l)$ admits a geometric quantization if and only if 
    \begin{equation}\label{integer points}
         p_j^l-p_i^l\in\mathbb{Z},\quad\forall i,j.
    \end{equation}
    In particular, the three K\"ahler forms $\omega_1,\omega_2,\omega_3$ admit simultaneously a geometric quantization (with 3 different line bundles) if and only if the centres $p_i$ belong to an affine lattice, i.e. if and only if there exists $c\in\R^3$ such that
    \[
    p_i\in c+\mathbb{Z}^3,\quad \text{for every $i$}.
    \]
\end{theorem}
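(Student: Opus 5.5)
The plan is to use the classification of complete hyperK\"ahler four-manifolds with an effective tri-Hamiltonian circle action (Bielawski): each one is given by the Gibbons--Hawking ansatz on $U=\R^3\setminus\{p_i\}$ with harmonic function
\[
V=c_0+\sum_i \frac{1}{2|x-p_i|},\qquad c_0\ge 0,
\]
with suitable convergence conditions when there are infinitely many centres. The hyperK\"ahler moment map $\mu=(\mu_1,\mu_2,\mu_3)\colon X\to\R^3$ is the bundle projection extended over the fixed points $\mu^{-1}(p_i)$. In Gibbons--Hawking form,
\[
\omega_l=d\mu_l\wedge\theta+V\, d\mu_m\wedge d\mu_n ,
\]
where $\theta$ is the connection form with $d\theta=\pi^*\!\star dV$ and $(l,m,n)$ is cyclic. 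Since $X$ is simply connected (or at least $H^1$ plays no role), a holomorphic Hermitian line bundle with curvature $\omega_l$ exists if and only if $[\omega_l/2\pi]$ is integral. By de Rham duality for the noncompact $X$, this means $\int_\Sigma\omega_l\in 2\pi\mathbb Z$ for every class $\Sigma\in H_2(X,\mathbb Z)$. A smooth complex line bundle with curvature $-i\omega_l$ is then automatically holomorphic for the complex structure $I_l$, because $\omega_l$ is of type $(1,1)$. So the statement reduces to a computation of periods.

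The first step is to identify $H_2(X,\mathbb Z)$. Take two centres $p_i,p_j$ and the straight segment joining them; for generic pairs, choose a path in $\R^3$ avoiding the other centres. Its preimage under $\mu$ is a $2$-sphere $S_{ij}$, since the circle fibres collapse at both endpoints. Using a Mayer--Vietoris/Morse argument for $\mu$, I would show that the classes $S_{i,i+1}$ (or all $S_{ij}$) generate $H_2(X,\mathbb Z)$. This is standard for ALE/ALF multi-Taub-NUT, and with infinitely many centres it holds by exhaustion, because homology commutes with the direct limit.

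The second step is the period computation. On $S_{ij}$, parametrize by the path $\gamma$ and the fibre angle. The term $V\,d\mu_m\wedge d\mu_n$ pulls back to zero, since the image in $\R^3$ is one-dimensional. The term $d\mu_l\wedge\theta$ integrates to $(\text{fibre length})\cdot\int_\gamma d\mu_l$, that is,
\[
\int_{S_{ij}}\omega_l=2\pi\,(p_j^l-p_i^l),
\]
with the normalization in which the fibres have period $2\pi$ and the $1/2$ in $V$ makes the fibration smooth near the centres. Hence integrality for all $S_{ij}$ is exactly condition \eqref{integer points}. The constants must be checked against the paper's normalization of the ansatz, and I expect this bookkeeping, together with the generation of $H_2$ in the infinite-centre case, to be the main obstacle.

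Finally, the simultaneous statement follows directly. If \eqref{integer points} holds for $l=1,2,3$, set $c=p_1$; then $p_i-c\in\mathbb Z^3$ for all $i$. Conversely, $p_i\in c+\mathbb Z^3$ gives integer differences in every coordinate.
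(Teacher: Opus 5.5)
Your proposal is correct and follows the paper's proof. Both arguments invoke the Bielawski--Swann classification, take the spheres $S_{ij}=\pi^{-1}(\gamma_{ij})$ over paths joining two centres, note that $V\,du_m\wedge du_n$ restricts to zero on them, and compute $\int_{S_{ij}}\omega_l=2\pi(p_j^l-p_i^l)$. Your extra points only fill in details the paper leaves implicit or quotes from Bena--Warner: the universal-coefficient criterion for integrality, holomorphicity of the bundle because $\omega_l$ is of type $(1,1)$, and the generation of $H_2(X,\mathbb{Z})$ by these spheres.
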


In \cite{TwoConjectures}, Loi, Salis, and Zuddas, conjecture that a Ricci–-flat metric on a $n$-dimensional complex manifold such that $a_{n+1} = 0$ is forced to be flat. In the case of complex surfaces, this conjecture is equivalent to prove that a Ricci–-flat surface with harmonic $|R|^2$ is flat. 

Our second result characterizes the flat metric among the hyperK\"ahler four manifolds of the Gibbons--Hawking family in terms of the vanishing of the $a_3$ coefficient, giving evidence of this conjecture. 
For a Ricci--flat K\"ahler metric $g$ we have
\[
a_3=\frac{1}{48}\Delta_g||R||^2.
\]
The quantity $a_3$ may be regarded, in the first instance, as the corresponding universal polynomial in the curvature tensor and its covariant derivatives. Whenever the TYCZ-expansion of the epsilon function exists, this polynomial coincides with the third coefficient of the expansion.
More precisely we prove that

\begin{theorem}\label{th: a3 intro}
    Let $(X,g)$ be a complete hyperK\"ahler four manifold with an effective tri-Hamiltonian circle action. Then the third coefficient $a_3$ associated to the metric $g$ is identically zero if and only if $g$ is the flat metric.    
\end{theorem}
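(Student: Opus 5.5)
Since $g$ is hyperK\"ahler, it is Ricci-flat, so by the formulas above $a_3=\frac1{48}\Delta_g\|R\|^2$. The statement therefore reduces to showing that on a complete Gibbons--Hawking space, $\|R\|^2$ is harmonic only when the metric is flat. One direction is trivial: if $g$ is flat then $R\equiv 0$, so $a_3\equiv 0$. For the converse I would use the classification of complete hyperK\"ahler four-manifolds with an effective tri-Hamiltonian circle action (Bielawski). Any such manifold is given by the Gibbons--Hawking ansatz
\[
g=V\,|d\mathbf{x}|^2+V^{-1}\theta^2,\qquad d\theta=*dV,\qquad V=c+\sum_{i}\frac{1}{2|\mathbf{x}-p_i|},\quad c\ge 0,
\]
where the centres $p_i\in\R^3$ are finitely or countably many, subject to a convergence condition. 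The flat cases are exactly $V=c>0$ with no centres (giving $\R^3\times S^1$) and $V=\frac1{2|\mathbf{x}|}$ with $c=0$ (giving $\C^2$).

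The key steps are as follows.

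\textbf{Step 1: compute $\|R\|^2$ and the Laplacian in terms of $V$.} For a Gibbons--Hawking metric the known formula is
\[
\|R\|^2=C\,\Delta_{\R^3}\Delta_{\R^3}\Big(\frac1V\Big),
\]
for a universal constant $C$. I would verify this via the anti-self-dual curvature in an orthonormal frame adapted to $d\mathbf{x}$ and $\theta$. For an $S^1$-invariant function $f$ one also has $\Delta_g f=V^{-1}\Delta_{\R^3}f$. Hence $a_3\equiv 0$ becomes the sixth-order condition
\[
\Delta_{\R^3}^3\big(V^{-1}\big)=0
\]
on $\R^3\setminus\{p_i\}$. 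That is, $u=1/V$ is triharmonic there, while $V$ itself is harmonic.

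\textbf{Step 2: analyse this condition near a centre.} Near $p_1$ we have $V=\frac{1}{2r}+h$, with $h$ harmonic and smooth near $p_1$ (writing $r=|\mathbf{x}-p_1|$). Then
\[
u=\frac{2r}{1+2rh}=2r-4r^2h+8r^3h^2-\dots
\]
Expand $h$ in spherical harmonics, $h=h_0+h_1(\mathbf{x})+\dots$, and apply $\Delta^3$ term by term, using $\Delta(r^kY_\ell)=\big(k(k+1)-\ell(\ell+1)\big)r^{k-2}Y_\ell$. This produces singular terms of orders $r^{-5}, r^{-4},\dots$, whose coefficients are polynomial in $h_0$, $\nabla h(p_1)$, and so on. The leading nonvanishing coefficient must vanish. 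I expect this to force first $h_0=0$ and then all higher jets of $h$ at $p_1$ to vanish; alternatively one can argue by real-analyticity. So near each centre, $V=\frac{1}{2r}$ exactly, which means $h\equiv 0$, i.e.\ there is only one centre and $c=0$. In that case $g$ is flat.

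\textbf{Step 3: the centreless case.} If there are no centres, then $V=c$ is constant and the metric is flat.

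The main obstacle is Step 2: pinning down the constant $C$ and carrying out the expansion carefully enough to see a genuinely nonzero obstruction coefficient. If the local analysis is inconclusive at low order, I would fall back on a global argument. Since $V$ is harmonic and analytic, $\Delta^3(1/V)=0$ holds on a connected analytic set. Examining the behaviour at infinity, where $V\sim c+\frac{N}{2|\mathbf{x}|}$ (or the corresponding growth when $N=\infty$), the expansion of $\Delta^3(1/V)$ gives leading terms like $|\mathbf{x}|^{-7}$ with coefficient proportional to $N$ times a power of $c$. This forces $N=0$ or $c=0$, and the case $c=0$ with $N\ge 2$ is then excluded by the local computation at a centre, which gives a nonzero term of order $h_0$.
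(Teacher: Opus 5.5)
There is a genuine gap. Your reduction in Step~1 rests on a wrong curvature formula, and everything after it inherits the error. For a Gibbons--Hawking metric one has $|R|^2=\tfrac12\,V^{-1}\Delta\Delta(V^{-1})$, with $\Delta$ the Euclidean Laplacian. It is not a constant multiple of $\Delta\Delta(V^{-1})$; the factor $V^{-1}$ is essential. Taub--NUT already rules out your version: there $\Delta\Delta V_e^{-1}=\frac{192e^2}{|u|(1+2e|u|)^5}$ blows up at the nut, whereas $|R|^2=\frac{192e^2}{(1+2e|u|)^6}$ is smooth. Hence $a_3\equiv0$ is equivalent to $\Delta\bigl(V^{-1}\Delta\Delta V^{-1}\bigr)=0$ on $\R^3\setminus\{p_i\}$, not to triharmonicity of $1/V$. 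So Step~2 and your fallback study the wrong equation.

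Independently of this, Step~2 is a hope rather than an argument:
no obstruction coefficient is computed, and the claim that all jets of $h$ must vanish is exactly what would need proof. Your order count is also off. By the rule $\Delta(r^kY_\ell)=(k(k+1)-\ell(\ell+1))r^{k-2}Y_\ell$ that you quote, the terms $2r$, $r^2h_0$, $r^3h_0^2$, $r^3Y_1$ are all killed by $\Delta^3$ away from $p_1$, so no $r^{-5},r^{-4},r^{-3}$ terms arise. Finally, the fallback at infinity assumes $V\sim c+N/(2|\mathbf{x}|)$. This fails for $N=\infty$, i.e.\ for the Anderson--Kronheimer--LeBrun metrics and their deformations, which the theorem covers.

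The paper avoids local jet analysis with a global argument. Since $\|R\|^2\ge0$ is $\Delta_g$-harmonic on a complete Ricci-flat manifold, Yau's Liouville theorem makes $1+\|R\|^2$ constant, and it only remains to show $\inf_X\|R\|^2=0$. Let $d$ be the distance to the centres. The estimates $|\nabla V|\le V/d$, $|D^2V|\le\sqrt6\,V/d^2$ and $V\ge 1/(2d)$ give $\|R\|^2=O(d^{-2})$. The convergence condition implies $\#\{j:|p_j|\le R\}\le AR$, so $O(R)$ balls of fixed radius cannot cover $B_R(0)$, and $d$ is unbounded. This treats finite and infinite topological type at once.

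If you prefer to keep a local argument, note that you only need $h_0=0$, not the vanishing of all jets. Indeed, $h_0=c+\sum_{j\ne1}\frac{1}{2|p_1-p_j|}$ vanishes only when $c=0$ and $p_1$ is the only centre. With the corrected formula, the value of $\Delta_g\|R\|^2$ at the nut over $p_1$ is a universal polynomial in the jet of $h$ at $p_1$. The rescaling $V\mapsto\lambda V(\lambda\,\cdot)$, which replaces $g$ by $\lambda^{-1}g$, together with $SO(3)$-equivariance, forces it to be a constant multiple of $h_0^3$. The Taub--NUT value $2\Delta_g^{LB}|R|^2=-18432e^3$ at $u=0$ shows the constant is nonzero. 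That would be a genuinely different, purely local proof, but it is not what your proposal carries out.
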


The paper is organized as follows. In Section \ref{sec: GH ansatz} we recall what we need about the Gibbons--Hawking construction. In Section \ref{sec: geo quant} we study the geometric quantization for these metrics proving Theorem \ref{th: quant intro}, while the last Section is devoted to the computations of the $a_3$ coefficient proving Theorem \ref{th: a3 intro}.

\section{Gibbons--Hawking ansatz}\label{sec: GH ansatz}

The Gibbons--Hawking ansatz is a technique to produce gravitational instantons introduced by G. Gibbons and S. Hawking in \cite{GH}. A \emph{gravitational instanton} is a complete $4$-dimensional Riemannian manifold which is a solution to the Einstein equations. In particular this ansatz produces $4$-dimensional hyperK\"ahler metrics with a tri-Hamiltonian $S^1$-action. In this section we describe this construction giving primary examples.  

Let $U\subset \R^3$ be an open subset of $\R^3$ with the Euclidean metric and set coordinates $u_1,u_2,u_3$ on $U$. Let $\pi:X\to U$ be a principal $S^1$-bundle with $S^1$-action $S^1\times X\to X$ given by $(e^{it},x)\mapsto e^{it}\cdot x$, where $t$ denotes the coordinate along the fibers, normalized to have period $2\pi$, with $\frac{\partial}{\partial t}$ the corresponding vector field of the $S^1$-action. Let $\theta$ be a connection $1$-form on $X$, i.e. a $i\R$-valued 1-form invariant under the circle action such that $\theta(\frac{\partial}{\partial t})=i$. Then there exists a $2$-form $\alpha$ on $U$ such that the curvature of the connection is  $d\theta=\pi^*\alpha$ and such that the first Chern class of the principal $S^1$-bundle $\pi:X\to U$ is $c_1(X)=[\frac{i}{2\pi}\alpha]$ and it is integral.
Let $V:U\to\R_+$ be a positive function on $U$ that satisfies the abelian monopole equation 
\begin{equation}\label{eq:dV}
\star dV=-i\alpha,    
\end{equation}
where $\star$ is the Hodge star operator on $\R^3$ (note that $V$ is harmonic since $d\alpha=0$, implying $\star d\star dV=0$). Let 
\[
\omega_1=du_1\wedge\frac{\theta}{i}+Vdu_2\wedge du_3
\]
\[
\omega_2=du_2\wedge\frac{\theta}{ i}+Vdu_3\wedge du_1
\]
\[
\omega_3=du_3\wedge\frac{\theta}{ i}+Vdu_1\wedge du_2.
\]
We observe that 
\begin{itemize}
    \item $\omega_1^2=\omega_2^2=\omega_3^2$ is nowhere zero,
    \item $\omega_i\wedge\omega_j=0$ for $i\neq j$,
    \item $d\omega_i=0$ for all $i$ (since \eqref{eq:dV} and $\star
    dV=\partial_x V du_2\wedge du_3+\partial_y V du_3\wedge du_1+\partial_z V du_1\wedge du_2$).
\end{itemize}
Thus $\omega_1,\omega_2,\omega_3$ define an hyperK\"ahler metric on the total space $X$.

Let $\theta_0=\frac{\theta}{i}$ be the real connection $1$-form, then
\[
\theta_0\bigg(\frac{\partial}{\partial t}\bigg)=1,\quad\int_{S^1}\theta_0=2\pi,
\]
and notice that
\[
-\omega_1-i\omega_2=(\theta_0-iVdu_3)\wedge(du_1+idu_2).
\]

By taking this to be the (holomorphic) $2$-form $\Omega$ on $X$, one obtains an integrable almost complex structure on $X$, where $du_1 + idu_2$ and $\theta_0-iVdu_3$ span the holomorphic cotangent space inside the complexified cotangent space. It follows that the (integrable)
almost complex structure $J$ on the cotangent space is given by
\[
J^*(du_1)=-du_2,\quad J^*(du_3)=-V^{-1}\theta_0.
\]
Thus, on vectors the almost complex structure is given by ($(J^*\alpha)X=\alpha(JX)$)
\[
J(\partial_{u_1})=\partial_{u_2},\quad J(\partial_{u_3})=V\theta_0^{\#}
\]
where $\theta_0^{\#}$ is the dual vector of $\theta_0$.

If we consider the K\"ahler form $\omega=\omega_3$ as an alternating tensor, and use the relation that if $g$ is the Riemannian metric, then $g(X,Y)=\omega(X,JY)$, we obtain an expression for the metric
\[
g_X=V\sum_{i=1}^3 du_i^2+V^{-1}\theta_0^2.
\]

Conversely, taken a positive harmonic function $V:U\to\R_+$ such that $-\frac{\star dV}{2\pi}$ represents $c_1(X)$, then we can always find a connection $1$-form $\theta$ such that $d\theta=i\star dV$ (such $\theta$ up to pullbacks of closed $1$-forms from $U$), obtaining hyperK\"ahler metrics as above. 

\begin{remark}\label{rem: curvature}
   By Cartan's method the curvature of this metric reads as
   \[
   |R|^2=12V^{-6}|\nabla V|^4+V^{-4}\Delta(|\nabla V|^2)-6V^{-5}(\nabla V)\cdot(\nabla(|\nabla V|^2))
   \]
   and by (Bochner's identity and) the fact that $V$ is harmonic, it reduces to
   \[
   |R|^2=\frac{1}{2}V^{-1}\Delta\Delta V^{-1},
   \]  
   where $\Delta$ is the standard Laplacian defined on $\R^3$.
\end{remark}

The basic examples of metrics arising from the Gibbons-Hawking ansatz are the Taub-NUT metrics on $\C^2$ and the Eguchi--Hanson metric. In this context they arise as follows:

\begin{example}[Taub-NUT metrics on $\C^2$]\label{ex:Taub-NUT}
Consider the map 
\[
p:X=\C^2\backslash\{(0,0)\}\to \R^3\backslash\{(0,0,0)\}
\]  
defined by
\[
(z_1,z_2)\mapsto(2\text{Re}(z_1 z_2),2\text{Im}(z_1z_2),|z_1|^2-|z_2|^2)
\]
obtained by composing the Hopf fibration with complex conjugation on $z_2$. 
This map exhibits $X$ as an $S^1$-bundle over $\R^3\backslash\{(0,0,0)\}$, with Chern class $\pm1$.The action of $S^1$ on $X$ is given by $e^{it}\cdot(z_1,z_2)=(e^{it}z_1,e^{-it}z_2)$.

We choose a positive harmonic function on $\R^3\backslash\{(0,0,0)\}$ such that the Chern number is $\pm 1$, i.e. 
\[
-\frac{1}{2\pi}\int_{S^2} \star dV=\frac{1}{2\pi}\int_{S^2} i\alpha=\pm 1.
\]
A such $V$ is 
\[
V_e=e+\frac{1}{2|u|}=e+\frac{1}{2\sqrt{u_1^2+u_2^2+u_3^2}},
\]
with $e\ge0$ a nonnegative constant.  

We take as connection form 
\[
\theta=i\frac{\text{Im}(\overline{z_1}dz_1-\overline{z_2}dz_2)}{(|z_1|^2+|z_2|^2)}
\]
and we have $d\theta=i\star dV$. We obtain in this way hyperK\"ahler metrics on $X=\C^2\backslash\{(0,0)\}$, which for all $e\ge0$ extend to $\C^2$. In fact, such metrics are ALF (asymptotically locally flat), approaching a flat metric when $|u|\to \infty$, whilst being periodic in $t$. When $e=0$, this is the flat metric on $\C^2$.   
\end{example}

\begin{example}[Eguchi--Hanson metric] Let $U=\R^3\backslash\{e_1,-e_1\}$ with $e_1=(1,0,0)\in\R^3$ and $V:U\to\R$ the harmonic function given by
\[
V(u)=\frac{1}{2}\sum_{i=1}^2\frac{1}{|u-p_i|},\quad\text{with $p_i\in\{e_1,-e_1\}$}.
\]
The Gibbons--Hawking ansatz gives the Eguchi--Hanson (or Calabi metric) on the cotangent bundle $T^*\CP^1$ of $\CP^1$. This metric on $X$ extends smoothly over the points $\pi^{-1}(p_1)$ and $\pi^{-1}(p_2)$ and it is ALE (asymptotically locally euclidean), asymptotic to the flat metric on $\R^4\backslash \mathbb{Z}_2$.
\end{example}

Choosing various harmonic functions $V$ in the Gibbons--Hawking construction produces a number of examples of complete hyperK\"ahler $4$-manifolds with tri-holomorphic circle action. 
The complete hyperK\"ahler $4$-manifolds with tri--Hamiltonian symmetry have been classified by Bielawski for the finite topological type in \cite{B} and extended by Swann to the infinite topological type in \cite{Swann}:

\begin{theorem}[Bielawski-Swann,\cite{B},\cite{Swann}]\label{th: BS}
If $M$ is a connected complete hyperK\"ahler four-manifold with an effective tri-Hamiltonian circle action, then $M$ is isometric to a Gibbons-Hawking metric, an Anderson-Kronheimer-LeBrun metric or a Taub-NUT deformation of one of these, namely
\begin{itemize}
    \item[(i)] For $V(u)=\frac{1}{2}\sum_{i=1}^k\frac{1}{|u-p_i|}$ with $p_i\in\R^3$ distinct points, we get the Gibbons--Hawking gravitational instantons;
    \item[(ii)] For $V(u)=\frac{1}{2}\sum_{i=1}^\infty\frac{1}{|u-p_i|}$ with $p_i\in\R^3$ distinct points, such that $V(p)<\infty$ for some $p\in\R^3$, we get the metrics of Anderson, Kronheimer and LeBrun (see \cite{AKL});
    \item[(iii)] Adding a positive constant $c>0$ to either of the previous two potentials we get the Taub-NUT deformations of the metrics.
\end{itemize}
\end{theorem}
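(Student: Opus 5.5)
The plan is to reconstruct the Gibbons--Hawking data intrinsically from the circle action, and then to classify the possible potentials $V$ with potential theory on $\R^3$.

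\textbf{Local Gibbons--Hawking form.} Let $K$ be the Killing field generating the action. Since the action is tri-Hamiltonian, there are moment maps $\mu_l$ with $d\mu_l=\iota_K\omega_l$, for $l=1,2,3$. They assemble into a hyperK\"ahler moment map $\mu=(\mu_1,\mu_2,\mu_3):X\to\R^3$. I would first show the following facts.
\begin{itemize}
\item The fixed point set of the action is discrete. At a fixed point, $K$ has vanishing covariant derivative in the hyperK\"ahler directions, so its linearisation is a triholomorphic, hence $\mathfrak{sp}(1)$-commuting, rotation of $\mathbb{H}$.
\item Effectiveness forces this rotation to be the standard action with weight one.
\item On the free part $X^\circ$, the map $\mu$ is a submersion and $\mu:X^\circ\to\mu(X^\circ)$ is a principal $S^1$-bundle.
\end{itemize}
Set $V=|K|^{-2}$ and let $\theta_0=V\,g(K,\cdot)$ be the connection form. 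A direct computation, reversing the derivation in Section \ref{sec: GH ansatz}, then puts the metric in the form $g=V\sum du_i^2+V^{-1}\theta_0^2$ with $u_i=\mu_i$. The closedness of the $\omega_l$ yields $\star dV=-d\theta_0$, so $V$ is a positive harmonic function on $U=\mu(X^\circ)$.

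\textbf{Global structure of $\mu$.} I would next show that $\mu$ is injective on orbit space and that its image is all of $\R^3$. Completeness of $g$ and the quotient metric $V\sum du_i^2\ge$ (a positive multiple of) the flat one, because $V$ is bounded below near infinity, should give that $\mu(X)$ is open and closed in $\R^3$. Connectedness of the fibres would come from the local normal form near orbits. The images of the fixed points form a closed discrete set $\{p_i\}$, and $U=\R^3\setminus\{p_i\}$. Near a fixed point, the weight-one linear model is exactly flat $\R^4$ via the Hopf-type map of Example \ref{ex:Taub-NUT}. This gives $V=\frac{1}{2|u-p_i|}+(\text{harmonic, smooth near }p_i)$; the Chern number $\pm1$ around each $p_i$ fixes the coefficient $\tfrac12$.

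\textbf{Potential theory.} Finally I would classify positive harmonic functions $V$ on $\R^3\setminus\{p_i\}$ whose singularities are exactly of this form. By Bôcher's theorem at each $p_i$, the function $h=V-\frac12\sum_i|u-p_i|^{-1}$ extends harmonically across every $p_i$, provided the sum converges. Positivity of $V$ forces convergence: on any ball, the partial sums are dominated by $V$ through the superharmonic comparison principle, giving $V(p)<\infty$ somewhere and hence everywhere off the centres. Thus $h$ is harmonic on all of $\R^3$. It is also bounded below, since the Riesz decomposition of the positive superharmonic extension of $V$ shows the Green-potential part is exactly the sum, so $h\ge0$. Liouville then gives $h\equiv c\ge0$. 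The cases of finitely many centres, infinitely many centres, and $c>0$ give (i), (ii) and (iii) respectively.

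\textbf{Main obstacle.} The hardest step will be the global one: proving that $\mu$ descends to a homeomorphism from $X/S^1$ onto $\R^3$, and in particular that its image is all of $\R^3$, in the infinite-topology case. There, completeness must be converted into completeness of the degenerate metric $V\sum du_i^2$ on $U$. I would first try to prove this by lifting horizontal paths and using $V\ge c$ or the lower bound $V\ge \frac{1}{2|u-p_1|}$.
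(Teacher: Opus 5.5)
The paper does not prove this theorem; it quotes it from \cite{B} and \cite{Swann}, so your outline can only be measured against what those works have to establish. Your local analysis is sound: fixed points are isolated, and effectiveness forces weight-one isotropy. You should add that there are no exceptional orbits, since a finite stabiliser fixing $K(x)\neq0$ also fixes $I_1K,I_2K,I_3K$. Your last step is also sound: the minimum principle gives $\frac12\sum_{i\le N}|u-p_i|^{-1}\le V$, hence convergence of the series, and the nonnegative harmonic remainder is constant by Liouville. But that last step needs $V$ to be a function on $\R^3\setminus\{p_i\}$ with $\{p_i\}$ closed and discrete. In other words, it needs $\bar\mu: X/S^1\to\R^3$ to be a homeomorphism onto $\R^3$. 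That global step, which you rightly flag as the main obstacle, is the real content of the Bielawski--Swann theorem. The mechanism you propose for it does not work.

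\textbf{Why the global step fails as proposed.}

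\emph{The lower bound on $V$ is not available.} $V$ is not known a priori to be bounded below near infinity, and for $c=0$ it is not: for flat $\R^4$, $V=\frac{1}{2|u|}\to0$.

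\emph{Even granted, it points the wrong way.} Write $\bar g=V\,\bar\mu^*|du|^2$ for the quotient metric. If $V\ge c>0$, then $\bar g\ge c\,\bar\mu^*|du|^2$, which only says that $\bar\mu$ is Lipschitz. That gives neither closedness of $\mu(X)$ nor path lifting. The horizontal lift of a Euclidean path $\gamma$ has length $\int\sqrt V\,|\dot\gamma|$. So you need the opposite control: every path leaving all compact sets of $X/S^1$ must have infinite Euclidean length. That is completeness of $\bar\mu^*|du|^2=V^{-1}\bar g$, which amounts to an upper bound on $V$ (a lower bound on $|K|$) away from the fixed points. With it, $\bar\mu$ is a local isometry from a complete length space to flat $\R^3$, hence a covering, hence a homeomorphism. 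The bound $V\ge\frac{1}{2|u-p_1|}$ you fall back on is circular, being an output of your potential theory, and it also points the wrong way.

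\emph{The local normal form gives only local injectivity of $\bar\mu$.} Connectedness of the fibres of $\mu$, and closedness and discreteness of the set of centres in $\R^3$, follow only once $\bar\mu$ is known to be a covering. Completeness must enter essentially here. Take $\widetilde U$ the universal cover of a non-simply-connected open $U\subset\R^3$ and the metric $\sum du_i^2+\theta_0^2$ on $\widetilde U\times S^1$ (i.e. $V\equiv1$). This is an incomplete example in which $\bar\mu$ is a local homeomorphism but not injective.

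\emph{A smaller omission.} Once $X/S^1\cong\R^3$, you should also note that $\R^3\setminus\{p_i\}$ is simply connected. Hence $V$ determines the circle bundle and $\theta_0$ up to gauge, and therefore the metric up to isometry.
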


\begin{remark}
    Considering the multi-centers metrics arising from the harmonic function
    \[
    V(u)=c+\frac{1}{2}\sum_{i=1}^k\frac{1}{|u-p_i|},
    \]
    we have (see \cite{D})
    \begin{itemize}
        \item for $c=0$ and $k=2$, the Eguchi-Hanson metric and for $c=0$ and $k>0$ the general $A_{k-1}$ gravitational ALE instantons;
        \item for $c>0$ and $k=1$, the Taub-NUT metric and for 
         $c>0$ and $k>1$ the general $A_{k-1}$ gravitational ALF instantons.
    \end{itemize}

    These metrics have finite topological type for $k<+\infty$, that is the second Betti number is $b_2(X)=k-1$, while they have infinite topological type for $k=+\infty$, that is $
    H_2(X,\mathbb{Z})=\bigoplus_{k=1}^\infty \mathbb{Z}$.
\end{remark}

\begin{remark}\label{rem: asymptotic}
    Let $k<+\infty$ be a positive integer and consider the metrics obtained by an harmonic function of the type
    \[
    V(u)=c+\frac{1}{2}\sum_{i=1}^k\frac{1}{|u-p_i|}
    \]
    that gives the multi-Taub-NUT metrics for $c>0$ and the multi-Eguchi-Hanson metrics for $c=0$.
    Let us study the asymptotic growth of the norm of the curvature tensor. 
    
    Let $c>0$.
    Setting $r:=|u|=\sqrt{u_1^2+u_2^2+u_3^2}$, we observe that for $r\to\infty$, each multipole term grows as
    \[
    \frac{1}{|u-p_i|}=\frac{1}{r}+\frac{p_i\cdot u}{r^3}+O(r^{-3})=\frac{1}{r}+\frac{p_i\cdot \frac{u}{|u|}}{r^2}+O(r^{-3}),
    \] 
    so that the function $V$ has asymptotic behavior
    \[
    V(u)= c+\frac{k}{2r}+\frac{1}{2r^2} \sum_{i=1}^k p_i\cdot\frac{u}{|u|}+O(r^{-3}),
    \]
    where the first part is radial in $r$ while the second depends on the direction $u/|u|$. In particular for each $s\ge0$, 
    \[
    \partial^s\big(V-c-\frac{k}{2r}\big)=O(r^{-2-s}).
    \]
    If we write
    \[
    V(u)=c+\frac{k}{2r}+E(u)
    \]
    with $E(u)=O(r^{-2})$ and $\partial^s E(u)=O(r^{-2-s})$, we have
    \[
    V^{-1}(u)=c^{-1}\bigg(1+\frac{1}{c}\bigg(\frac{k}{2r}+E(u)\bigg)\bigg)^{-1}=\frac{1}{c}-\frac{k}{2c^2r}+O(r^{-2}).
    \]
    Since for $r\to\infty$, $\Delta(1/c)=\Delta(1/r)=0$, we have
    \[
    \Delta\Delta V^{-1}= \Delta\Delta\bigg(V^{-1}-\frac{1}{c}+\frac{k}{2c^2r}\bigg).
    \]
    From 
    \[
    \partial^s\bigg(V^{-1}-\frac{1}{c}+\frac{k}{2c^2r}\bigg)=O(r^{-2-s}),
    \]
    we obtain
    \[
    \Delta V^{-1}=O(r^{-4})
    \]
    so that
    \[
    \Delta\Delta V^{-1}=O(r^{-6}).
    \]
    Finally, being $V^{-1}=c^{-1}+O(r^{-1})$, we conclude
    \[
    |R|^2=\frac{1}{2}V^{-1}\Delta\Delta V^{-1}=O(r^{-6}).
    \]
    
    Let now $c=0$ and $\rho\sim\sqrt{|u|}=2\sqrt{r}$ be the geodesic radius on the manifold $X$. In this case we obtain the multi-Eguchi-Hanson metric that is an ALE $A_k$ gravitational instanton. Thus the metric is required to satisfy (see \cite{K}) 
    \[
    g=g_{euc}+O(\rho^{-4}).
    \]
    From this it follows that $g^{-1}=O(1)$. Moreover, 
    \[
    \Gamma_{ij}^k=\frac{1}{2}g^{kj}(\partial_i g_{lj}-\partial_j g_{il}+\partial_l g_{ij})=O(\rho^{-5})
    \]
    and
    \[
    R_{ijkl}=g_{lm}R^m_{ijk}=g_{lm}(\partial_i\Gamma_{jk}^m-\partial_j\Gamma_{ik}^m+\Gamma_{jk}^p\Gamma_{ip}^m-\Gamma_{ik}^p\Gamma_{jp}^m)=O(\rho^{-6}),
    \]
    giving
    \[
    |R|^2=g^{ia}g^{jb}g^{kc}g^{ld}R_{ijkl}R_{abcd}=O(\rho^{-12}),
    \]
    i.e. $|R|=O(\rho^{-6})$.
\end{remark}

\section{Geometric quantization}\label{sec: geo quant}

A geometric quantization $(L,h)$ of a $n$-dimensional K\"ahler manifold $(M,\omega)$ consists of an hermitian holomorphic line bundle $L$ over $M$ such that the first Chern class of $L$ is represented by $\frac{\omega}{2\pi}$ and
its curvature ${\rm Ric}(h):=-{i}\partial\overline\partial\log h$ satisfies ${\rm Ric} = \omega$. Notice that such an $(L,h)$ exists if and only if $[\frac{\omega}{2\pi}]\in H^2(X,\mathbb{Z})$.

 In our case, being the metric hyperK\"ahler, there are three different K\"ahler forms $\omega_1,\omega_2,\omega_3$ and we want to study when a geometric quantization exists, namely study when $[\frac{\omega_j}{2\pi}]\in H^2(X,\mathbb{Z})$ for $j=1,2,3$. We remark that for a K\"ahler form $\omega$ to be integral means that
 \[
 \frac{1}{2\pi}\int_\gamma \omega\in \mathbb{Z},
 \]
 where $\gamma$ are the $2$-cycles that generate the second homology group $H_2(X,\mathbb{Z})$. The Gibbons-Hawking metrics are constructed on a principal $S^1$-bundle $X$ over an open set of $\R^3$, thus the space $X$ is locally $S^1\times\R^3$. 

 Let us consider the metrics obtained by the Gibbons-Hawking ansatz with $V=c+\frac{1}{2}\sum_{j=1}^N\frac{1}{|x-p_j|}$, with $c\ge0$ and $N\le+\infty$ as in Theorem \ref{th: BS}: in this case the open set $U$ is $\R^3\backslash\{p_1,\dots,p_N\}$. 

 As described in \cite{BW}, the multi-center Gibbons-Hawking metrics contain $\frac{1}{2}N(N-1)$ topologically non-trivial two-cycles, $\Delta_{ij}$ (respectively a countable basis of $H_2(X,\mathbb{Z})$ in the infinite topological case), that run between the Gibbons--Hawking centers. These two-cycles can be
defined by taking any curve, $\gamma_{ij}$, between  $p_i$ and $p_j$ and considering the $S^1$-fiber along the curve. This fiber collapses to zero at the Gibbons--Hawking centers, and so the curve and the fiber produce a $2$-sphere $S^2_{ij}$ (up to $\mathbb{Z}_{|q_j|}$ orbifolds). These spheres intersect one another at the common points $p_j$. There are $(N-1)$  linearly independent homology two-spheres, and the set $\Delta_{i(i+1)}$ represents a basis. 

\begin{proof}[Proof of Theorem \ref{th: quant intro}]

Let us consider an oriented smooth curve 
\[
\gamma_{ij}:[0,1]\to\R^3
\]
with 
\[
\gamma_{ij}(0)=p_i,\quad \gamma_{ij}(1)=p_j
\]
 which does not pass through any other center. At every point of the curve, the fiber is a circle $S^1$ while at the border the fiber collapses. We have
\[
S^2_{ij}=\pi^{-1}(\gamma_{ij})\cong S^2.
\]
We parametrize the curve $\gamma_{ij}$ by
\[
\gamma_{ij}(t)=\big(u_1(t),u_2(t),u_3(t)\big),\quad t\in[0,1].
\]
 In particular
\[
du_1=\dot{u_1}(t)dt,\quad du_2=\dot{u_2}(t)dt,\quad du_3=\dot{u_3}(t)dt 
\]
and in the form $\omega_1$, the term $Vdu_2\wedge du_3$ vanishes, being $du_l$ proportional to $dt$. Thus
\[
\omega_1|_{S^2_{ij}}=du_1\wedge\frac{\theta}{ i}
\]
and we have
\[
\int_{S^2_{ij}}\omega_1=\int_{S^2_{ij}} du_1\wedge\frac{\theta}{ i}=\int_{S^1} \frac{\theta}{ i} \int_{\gamma_{ij}}du_1=2\pi(p_j^1-p_i^1).
\]
Similarly for the K\"ahler forms $\omega_2$ and $\omega_3$. Thus, from the classification result of complete $4$-hyperK\"ahler manifolds with a $S^1$-action, we have proved the Theorem.
\end{proof}

 It should be noted that for a general noncompact manifold which admits a geometric quantization, there is not a general theorem which assures the existence of the TYCZ-expansion of the $\epsilon$-function. In this direction sufficient conditions are given by X. Ma and G. Marinescu \cite[Theorem 6.1.1]{mamarinescu}, which translated in our setting reads as (see also \cite[Theorem 5]{CZ1} and \cite[Theorem 3]{CZ})

\begin{theorem}
   Let $(X,g,\omega)$ be a complete K\"ahler manifold. %and let $(L,h)$ be an hermitian holomorphic line bundle on $X$. 
   Then the epsilon function $\epsilon_{m g}$ admits an asymptotic expansion in $m$ with coefficients given by \eqref{coeffespan} provided there exist a constant $c>0$ such that
   \[
    iR^{\det}>-c\omega %,\quad|\partial\omega|_g<c,  iR^L>l\omega ,\quad
   \]
where $R^{\det}$ denotes the curvature of the connection on $\det(T^{1,0}X)$ induced by $g$. 
% and $R^L$ the curvature of the connection on  $L$ induced by the hermitian metric $h$.
\end{theorem}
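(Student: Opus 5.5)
This is a specialization of Ma--Marinescu \cite[Theorem 6.1.1]{mamarinescu}, so the plan is to check its hypotheses in our setting rather than prove anything new. Their theorem concerns a complete Hermitian manifold $(X,\omega)$ with a holomorphic line bundle $(L,h)$ and an auxiliary bundle $E$, here trivial. They assume positive curvature of $L$, bounded from below by a multiple of $\omega$ and from above, and a lower bound on $iR^{\det}$. Under these assumptions they obtain the Bergman kernel expansion on compact subsets, with coefficients given by the same local universal polynomials as in the compact case. First, we will take $L$ to be the quantization bundle, so $iR^L=\omega$. This gives the required positivity $iR^L\ge\epsilon\omega$ with $\epsilon=1$. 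Because $\omega$ is Kähler, the torsion terms $\partial\omega$ vanish, so their conditions on $|\partial\omega|_g$ hold trivially. The only remaining hypothesis is the lower bound $iR^{\det}>-c\omega$, which is exactly what the statement assumes.

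Second, we will recall the structure of their proof. It uses the spectral gap of the Kodaira Laplacian $\square_m$ on $L^2$ sections of $L^m$. This gap comes from the Bochner--Kodaira--Nakano formula: the curvature term $m\,iR^L+iR^{\det}\ge (m-c)\omega$ is positive for $m>c$. The gap then makes the $L^2$ Bergman projection localize, so that off-diagonal decay holds and the kernel near a point depends only on the geometry in a neighborhood, up to $O(m^{-\infty})$. Completeness is needed so that $\bar\partial$ has closed range and the Andreotti--Vesentini/Hörmander $L^2$ estimates apply.

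Third, the local model computation agrees with the compact one. Hence $\epsilon_{mg}=K_m(x,x)$ (with respect to $h_m$ and $\omega^n/n!$) has the expansion \eqref{asympt} uniformly on compacts, with coefficients $a_j$ given by the same universal curvature polynomials. Therefore the coefficients coincide with Lu's and Engli\v s's formulas \eqref{coeffespan}. The normalization $(m/2\pi)^{n-j}$ should be matched with Ma--Marinescu's $m^{n-j}$ convention through the $2\pi$ factors in $c_1(L)=[\omega/2\pi]$.

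The main obstacle is bookkeeping rather than analysis. Ma--Marinescu work with $\omega$ possibly differing from $\frac{i}{2\pi}R^L$, so we must check that, with our normalizations, the volume form and the metric on $L^m$ agree. Otherwise the leading term is not $a_0\equiv1$ and the lower-order coefficients pick up extra terms. I would handle this by setting their $\omega$ equal to $\frac{i}{2\pi}R^L=\omega/2\pi$ and rescaling the metric accordingly. Then I would verify that the $iR^{\det}$ bound is invariant under this constant rescaling up to changing $c$.
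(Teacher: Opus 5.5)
Your proposal is correct and follows the same route as the paper. The paper gives no independent proof: it simply specializes Ma--Marinescu \cite[Theorem 6.1.1]{mamarinescu} to $E$ trivial and $L$ the quantizing bundle, where $iR^L=\omega$ and $\partial\omega=0$ leave the lower bound $iR^{\det}>-c\omega$ as the only hypothesis to impose; your normalization $\Theta=\omega/2\pi$, together with the constant change of volume form, is what reconciles their $p^{n-j}$ convention with the paper's $(m/2\pi)^{n-j}$. One small correction is that Theorem 6.1.1 requires only $iR^L>\varepsilon\Theta$, $i(R^{\det}+R^E)>-C\Theta$ and $|\partial\Theta|_{g^{TX}}<C$, with no upper bound on $iR^L$, although this is harmless here.
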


 Once the Ma–Marinescu Bergman kernel expansion is available under the positivity assumptions above, the leading term of the Bergman kernel is positive. Hence, for all sufficiently large $m$, the corresponding space of square-integrable holomorphic sections cannot be trivial.

As shown in \cite{CZ}, these conditions are satisfied for a K\"ahler--Einstein metric.
In particular, since the metrics constructed by the Gibbons--Hawking ansatz are Ricci-flat, they are K\"ahler--Einstein and combining this Theorem and Theorem \ref{th: quant intro}, we obtain in our situation

\begin{theorem}
    Let $g$ be a complete hyperK\"ahler metric on a principal $S^1$-bundle over an open set $U$ of $\R^3$ satisfying the integrality conditions on the points \eqref{integer points}. Then
\[
\mathcal{H}_m\neq\{0\}
\]
for all sufficiently large $m$, and the corresponding epsilon function $\epsilon_{mg}$ admits a TYCZ expansion in $m$, uniformly on compact subsets.
\end{theorem}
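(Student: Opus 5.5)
The plan is to combine the three ingredients already on the table: the integrality criterion of Theorem \ref{th: quant intro}, the Ma--Marinescu criterion in the form stated just above, and the Ricci-flatness of the Gibbons--Hawking metrics. I would proceed in the following order.

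\emph{Step 1: existence of the quantization.} Fix one of the Kähler forms $\omega=\omega_l$. By Theorem \ref{th: BS} the metric $g$ is one of the multi-center metrics with $V=c+\frac12\sum_j|u-p_j|^{-1}$. The integrality hypothesis \eqref{integer points} gives, via Theorem \ref{th: quant intro}, that $[\omega/2\pi]\in H^2(X,\mathbb{Z})$. Hence there is a Hermitian holomorphic line bundle $(L,h)$ with ${\rm Ric}(h)=\omega$, and $(L^m,h_m)$ is a quantization of $(X,m\omega)$. This requires checking that integrality on the spheres $S^2_{ij}$ suffices. In the infinite topological type this uses that these spheres generate $H_2$, and that $H^2(X,\mathbb{Z})\to{\rm Hom}(H_2,\mathbb{Z})$ detects integrality, so that torsion plays no role in the existence of $L$.

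\emph{Step 2: the curvature hypothesis of Ma--Marinescu.} Since $g$ is hyperKähler it is Ricci-flat. The curvature of the induced connection on $\det(T^{1,0}X)$ is the Ricci form, so $iR^{\det}=\rho_g=0$. Therefore $iR^{\det}=0>-c\,\omega$ for any $c>0$, because $\omega$ is positive. Completeness of $g$ is part of the hypothesis.

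\emph{Step 3: conclusion.} Apply the Ma--Marinescu theorem to $(X,g,\omega)$ with the bundle $L$. Its conclusion is an expansion of the Bergman kernel of the $L^2$ holomorphic sections of $L^m$ on the diagonal. This yields $\epsilon_{mg}\sim\sum_j a_j (m/2\pi)^{2-j}$ uniformly on compact subsets, with $a_0\equiv1$ and the $a_j$ given by \eqref{coeffespan}. For nontriviality, fix any compact $K$. Uniform convergence gives $\epsilon_{mg}\ge \frac12 (m/2\pi)^2>0$ on $K$ for $m\ge m_0(K)$. Since $\epsilon_{mg}=\sum_j h_m(s_j,s_j)$, the space $\mathcal{H}_m$ contains a nonzero section, so $\mathcal H_m\neq\{0\}$ for all large $m$.

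\emph{Main obstacle.} The delicate point is Step 3: verifying that the version of Ma--Marinescu being used really needs only completeness plus the lower bound on $iR^{\det}$. Their Theorem 6.1.1 also asks for the positivity $iR^L\ge \varepsilon\omega$ and for bounded geometry-type control such as $|\partial\omega|_g$ bounded. Here $iR^L=\omega$ exactly, and $\partial\omega=0$ since $g$ is Kähler, so both conditions are automatic. I would state these explicitly, following \cite{CZ}, where the Kähler--Einstein case is treated.
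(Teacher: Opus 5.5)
Your proposal is correct and follows the same route as the paper. Theorem \ref{th: quant intro} turns the integrality hypothesis into a polarization $(L,h)$ with ${\rm Ric}(h)=\omega$. Ricci-flatness gives $iR^{\det}=0>-c\,\omega$, so the Ma--Marinescu criterion (as applied to K\"ahler--Einstein metrics in \cite{CZ}) yields the TYCZ expansion on compact sets, and positivity of the leading term forces $\mathcal H_m\neq\{0\}$ for large $m$. You go slightly further than the paper by stating and checking the hypotheses $iR^L\ge\varepsilon\omega$ and $|\partial\omega|_g<C$, which are automatic here since $iR^L=\omega$ and $\partial\omega=0$; the paper's displayed version of the criterion omits them.
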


\begin{remark}
     In particular, the Taub--NUT and Eguchi--Hanson metrics satisfy the above assumptions, as shown in \cite{LZZ} and \cite{aghedu}, respectively. In these works, the authors investigate the balanced condition for these metrics and obtain explicit expressions for the $\epsilon$-function by constructing complete orthonormal systems for $\mathcal{H}_m$.
\end{remark}

\section{The $a_3$ coefficient of the Gibbons--Hawking metrics}\label{sec: a3 computations}

We start by computing the third coefficient of the asymptotic expansion of the $\epsilon$-function for the basic examples of metrics arising from the Gibbons--Hawking ansatz, namely the Taub--NUT metric and the Eguchi--Hanson metric (the proofs of the following Propositions contain also the expression of the $a_2$ coefficient which can be also found in the Appendix of \cite{LZZ} and \cite{CZ1} respectively where these metrics arise from different constructions). In particular, we recover the classification result of the flat metric as the only one in the Taub-NUT family with vanishing $a_3$ (given in \cite{LZZ}). 

\begin{remark}%[Conventions for $\Delta_g$ and curvature norms]
We briefly clarify that for a K\"ahler metric, we denote by
\[
\Delta_g=\sum_{i,j}g^{i\overline j}\partial_i\partial_{\overline j}
\]
the complex Laplacian. In real coordinates, the corresponding Laplace--Beltrami operator on the underlying Riemannian manifold is
\[
\Delta_g^{LB}=2\Delta_g.
\]
Likewise, if $| R|^2=R_{abcd}R^{abcd}$ denotes the norm of the Riemann curvature tensor computed with the real Riemannian metric, then for a K\"ahler metric
\[
|R|^2=4||R||^2,
\]
where $||R||^2$ is the complex norm used in the curvature coefficients above. Consequently, in the Ricci-flat case,
\[
a_2=\frac{1}{24}||R||^2
=\frac{1}{96}|R|^2,
\]
and
\[
a_3=\frac{1}{48}\Delta_g||R||^2
=\frac{1}{192}\Delta_g|R|^2
=\frac{1}{384}\Delta_g^{LB}|R|^2.
\]
%These conversion factors should be taken into account when comparing our computations, which are written in the Gibbons--Hawking parametrization and using real curvature norms, with formulas in the literature written in different parametrizations or conventions. In particular, a direct numerical comparison of the displayed formulas is meaningful only after the corresponding parametrizations and normalizations of the metric have been identified.
\end{remark}

First we note the following:

\begin{lemma}
Let \(U\subset\mathbb{R}^3\) be an open subset and let
\(f:U\to\mathbb{R}\) be a smooth function. Then, on
\(\pi^{-1}(U)\), we have
$$
\Delta^{LB}_g(\pi^*f)
=
\pi^*\bigl(V^{-1}\Delta f\bigr),
$$
where \(\Delta\) denotes the standard Euclidean Laplacian on
\(\mathbb{R}^3\).
\end{lemma}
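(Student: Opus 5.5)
We compute the Laplace--Beltrami operator of the Gibbons--Hawking metric directly, using its coordinate form
\[
g=V\sum_{i=1}^3 du_i^2+V^{-1}\theta_0^2 .
\]
Work locally over a contractible open set in $U$ and trivialize the bundle, so that $\theta_0=dt+A$ with $A=\sum_i A_i(u)\,du_i$ a $1$-form on $U$. In the coframe $(du_1,du_2,du_3,\theta_0)$ the metric is diagonal with entries $(V,V,V,V^{-1})$. The dual frame is
\[
e_i=\partial_{u_i}-A_i\partial_t,\qquad e_4=\partial_t ,
\]
and the inverse metric in this frame is $\mathrm{diag}(V^{-1},V^{-1},V^{-1},V)$. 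The volume form is $V^{3/2}V^{-1/2}\,du_1\wedge du_2\wedge du_3\wedge\theta_0=V\,du_1\wedge du_2\wedge du_3\wedge dt$, so $\sqrt{\det g}=V$ in the coordinates $(u,t)$.

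Next we apply the divergence formula $\Delta^{LB}_g F=\frac{1}{\sqrt{\det g}}\partial_a\big(\sqrt{\det g}\,g^{ab}\partial_b F\big)$ in the coordinates $(u_1,u_2,u_3,t)$. Take $F=\pi^*f$; it is independent of $t$, so only $\partial_{u_b}F$ is nonzero. The relevant inverse-metric entries in coordinate form are
\[
g^{u_iu_j}=V^{-1}\delta_{ij},\qquad g^{t u_j}=-V^{-1}A_j ,
\]
which one gets by expressing the frame $e_i,e_4$ in coordinates. Hence
\[
\sqrt{\det g}\,g^{u_iu_j}\partial_{u_j}F=\partial_{u_i}f,\qquad
\sqrt{\det g}\,g^{t u_j}\partial_{u_j}F=-A_j\partial_{u_j}f .
\]
The $t$-component depends only on $u$, so its $\partial_t$-derivative vanishes. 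Therefore
\[
\Delta^{LB}_g(\pi^*f)=V^{-1}\sum_i\partial_{u_i}^2 f=\pi^*\big(V^{-1}\Delta f\big).
\]

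The result is independent of the local trivialization and gauge choice of $A$. It therefore holds on all of $\pi^{-1}(U)$, and the right-hand side is a globally defined invariant expression.

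The main point requiring care is the determinant and the off-diagonal entries $g^{t u_j}$. Here the cancellation of the factor $V$ between $\sqrt{\det g}=V$ and $g^{uu}=V^{-1}$ is essential, and the harmonicity of $V$ is not needed. An alternative check is the orthonormal-frame divergence formula: the connection terms $\partial_t$ and the $A_i$ drop out because $f$ is $S^1$-invariant and $A$ is independent of $t$.
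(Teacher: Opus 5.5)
Your proof is correct and follows the same route as the paper: the coordinate divergence formula with $\sqrt{\det g}=V$ and $g^{u_iu_j}=V^{-1}\delta_{ij}$, with the mixed terms dropping out by $t$-invariance. You are slightly more careful than the paper. You compute $g^{tu_j}=-V^{-1}A_j$ explicitly and observe that the term $\partial_t\bigl(\sqrt{\det g}\,g^{tu_j}\partial_{u_j}f\bigr)$ vanishes because its coefficients are $t$-independent, not merely because $\partial_t(\pi^*f)=0$.
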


\begin{proof}
Since \(\pi^*f\) is \(t\)-invariant, we have $\partial_t(\pi^*f)=0$.
Therefore, the mixed terms \(g^{i0}\) in the Laplace--Beltrami
operator do not contribute. Since 
$$
\sqrt{\det g}=V
\qquad\text{and}\qquad
g^{ij}=V^{-1}\delta^{ij},
$$
we obtain
$$
\begin{aligned}
\Delta^{LB}_g(\pi^*f)
&=
\frac{1}{\sqrt{\det g}}
\partial_\alpha
\left(
\sqrt{\det g}\,
g^{\alpha\beta}
\partial_\beta(\pi^*f)
\right)\\
&=
\frac{1}{V}
\partial_i
\left(
V\,V^{-1}\delta^{ij}\partial_j f
\right)\\
&=
\frac{1}{V}\sum_{i=1}^3\partial_i^2f\\
&=
\pi^*\bigl(V^{-1}\Delta f\bigr).
\end{aligned}
$$
\end{proof}

In what follows, we identify a function $f$ on the base with its
$t$-invariant lift $\pi^*f$ to $X$. Thus, we will simply write
\[
\Delta_g^{LB} f=V^{-1}\Delta f
\]
without explicitly indicating the pullback.

\begin{proposition}
 Let $e\ge0$ be a real number and $g_e$ be the Taub--NUT metric on $\C^2$ given by the harmonic function $V_e$ (of Example \ref{ex:Taub-NUT}). The third coefficient of the asymptotic expansion of the epsilon function vanishes iff $g_e=g_0$, i.e. iff $g$ is the flat metric on $\C^2$.
\end{proposition}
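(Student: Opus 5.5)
Since $g_e$ is Ricci--flat, the remark preceding the Lemma gives $a_3=\frac{1}{384}\Delta_g^{LB}|R|^2$. Remark \ref{rem: curvature} gives $|R|^2=\frac12 V^{-1}\Delta\Delta V^{-1}$, a $t$-invariant function on the base. The Lemma then yields
\[
a_3=\frac{1}{384}\,V_e^{-1}\,\Delta\Big(\tfrac12 V_e^{-1}\Delta\Delta V_e^{-1}\Big),
\]
where $\Delta$ is the Euclidean Laplacian on $\R^3\setminus\{0\}$. Everything is radial in $r=|u|$, so I will use $\Delta f=r^{-1}(rf)''$ for radial $f$. The plan is to compute this explicitly as a function of $r$ and $e$ and to read off when it vanishes identically. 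The value at the origin then follows by continuity, since the metric extends smoothly to $\C^2$.

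For the computation, write $V_e^{-1}=\frac{2r}{2er+1}$. For $e=0$ this is $2r$, with $\Delta(2r)=4/r$ and $\Delta(1/r)=0$, so $|R|^2\equiv0$ and $a_3\equiv0$, as expected for the flat metric.

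For $e>0$, set $s=2er+1$. Then $rV_e^{-1}=\frac{2r^2}{s}$, which can be expanded in partial fractions in $s$. Repeated application of $r^{-1}(r\,\cdot)''$ gives closed forms. I expect $\Delta V_e^{-1}=\frac{c_1}{r s^3}$ up to constants and, after one more step, $|R|^2=\frac{C e^2}{s^6}$ with $C>0$. The known value of $\|R\|^2$ for Taub--NUT is a pure power of $(1+2er)$, which supports this. Then
\[
\Delta\Big(\frac{1}{s^6}\Big)=r^{-1}\Big(\frac{r}{s^6}\Big)''
\]
is an explicit rational function. I will simplify its numerator, which is a polynomial in $r$, and show it is not identically zero, for instance by checking its value as $r\to0^+$ or its leading coefficient. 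This gives $a_3\not\equiv0$ whenever $e>0$.

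The main obstacle is keeping the repeated radial Laplacians of $V_e^{-1}$ correct; sign and constant errors would spoil the final numerator. As a cross-check I will use Remark \ref{rem: asymptotic}: for large $r$ the computed $|R|^2$ must be $O(r^{-6})$, which is consistent with $e^2 s^{-6}$. For the final step only non-vanishing matters, so a cruder argument would also do. Since $|R|^2=Ce^2s^{-6}$ is a strictly decreasing positive radial function with a nonzero derivative at the origin, $\Delta|R|^2$ cannot vanish identically. Indeed, a radial function harmonic on a punctured ball has the form $a+b/r$, while $s^{-6}$ is smooth and non-constant at $r=0$.
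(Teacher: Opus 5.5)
Your proposal is correct and follows the paper's own route. You reduce to $\Delta_g^{LB}|R|^2=V_e^{-1}\Delta|R|^2$ via Remark~\ref{rem: curvature} and the Lemma, then compute radially. Your expected intermediate values check out, with $s=1+2er$: $\Delta V_e^{-1}=4/(rs^3)$, $|R|^2=192e^2/s^6$, and $\Delta(s^{-6})=24e(5er-1)/(rs^8)$. These reproduce the paper's $2\Delta_g^{LB}|R|^2=18432e^3(5e|u|-1)/(1+2e|u|)^9$. Your fallback argument is also valid: a radial harmonic function on a punctured ball has the form $a+b/r$, so the smooth non-constant $s^{-6}$ cannot be harmonic, and this proves non-vanishing without tracking the final constants.
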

\begin{proof}
Being the metrics constructed by Gibbons-Hawking ansatz Ricci--flat and by Lemma 1 of \cite{CZ}, we just need to compute $\Delta_g^{LB}|R|^2$. First we have
\[
\Delta\Delta V_e^{-1}=\frac{192 e^2 }{|u|(1+2e|u|)^5}.
\]
By the remark \ref{rem: curvature}, it follows that
\[
2|R|^2=\frac{384 e^2}{(1+2e|u|)^6}.
\]
Then by the lemma above
\[
2\Delta_g^{LB}|R|^2=\frac{18432e^3(5e|u|-1)}{(1+2e|u|)^9}
\]
which vanishes identically iff $e=0$, that is iff $g$ is the flat metric on $\C^2$.
\end{proof}

In the following Proposition we compute the $a_3$ coefficient of the Eguchi--Hanson metric: 

\begin{proposition}
    The $a_3$ coefficient of the Eguchi--Hanson metric is not identically zero.
\end{proposition}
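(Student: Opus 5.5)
Since the Eguchi--Hanson metric is Ricci--flat, I will reduce $a_3$ to a single scalar, exactly as in the Taub--NUT case. By the earlier remark, $a_3=\frac{1}{384}\Delta_g^{LB}|R|^2$. By Remark \ref{rem: curvature}, $|R|^2=\frac12 V^{-1}\Delta\Delta V^{-1}$ with $V=\frac12\big(\frac1{|u-e_1|}+\frac1{|u+e_1|}\big)$, and $|R|^2$ is $t$-invariant. The lemma above then gives
\[
\Delta_g^{LB}|R|^2=V^{-1}\Delta\Big(\tfrac12 V^{-1}\Delta\Delta V^{-1}\Big).
\]
Since $V^{-1}>0$ on $U$, it suffices to show that $F:=\Delta\big(V^{-1}\Delta\Delta V^{-1}\big)$ is not identically zero on $U=\R^3\setminus\{\pm e_1\}$.

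I would not compute $F$ globally. Instead I would restrict to a convenient region where a one-variable expansion is possible, namely the axis or a neighbourhood of infinity. The cleanest option is to use Remark \ref{rem: asymptotic} with $c=0$, $k=2$. There $|R|=O(\rho^{-6})$, i.e.\ $|R|^2\sim C\rho^{-12}\sim C' r^{-6}$ in base coordinates, and $r^{-6}$ has nonzero Euclidean Laplacian: $\Delta r^{-6}=(-6)(-5)r^{-8}-12r^{-8}=30r^{-8}-12r^{-8}=18r^{-8}\neq0$.

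The main step is therefore to show that the leading coefficient $C'$ is nonzero and that the error is of strictly lower order with derivatives. Concretely, I would expand
\[
V^{-1}=r\Big(1-\tfrac{1}{2r^2}\big(3\cos^2\phi-1\big)+O(r^{-4})\Big)
\]
using the Legendre expansion. The odd terms cancel by the symmetry $p_2=-p_1$, which makes the expansion tidy. Next, $\Delta r=2/r$ and $\Delta\Delta r=0$, and $\Delta(r^{-1}P_2)=\Delta(r^2\cdot r^{-3}P_2)$ is computed via $\Delta(r^aP_l)=(a(a+1)-l(l+1))r^{a-2}P_l$. This yields $\Delta\Delta V^{-1}$ to leading order as an explicit multiple of $r^{-5}P_2$ plus possibly an $r^{-5}$ term. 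Multiplying by $V^{-1}\approx r$ gives $|R|^2\approx r^{-4}(aP_2+b)$, and applying the same formula once more gives the leading term of $F$. That term is nonzero as long as not all coefficients $a(a+1)-l(l+1)$ vanish, and each factor can be checked by hand.

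The obstacle is that the leading orders might cancel; for example $\Delta\Delta(r^{-1}P_2)$ could vanish, because $r^{-3}P_2$ is harmonic. If that happens I would carry the expansion to the next order, or switch strategy and evaluate exactly on the $u_1$-axis, or near a center $p_1$ where $V^{-1}=2|u-p_1|(1+O(|u-p_1|))$. As a fallback, I would use the known closed form of the Eguchi--Hanson curvature in the standard radial coordinate $\rho$, namely $|R|^2=C\,a^8/\rho^{12}$, together with the radial Laplacian on $\R^4$:
\[
\Delta^{LB}\rho^{-12}\approx\big(144-36\big)\rho^{-14}+\dots\neq 0.
\]
This shows directly that $a_3\not\equiv0$.
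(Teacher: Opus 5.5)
The reduction to $F=\Delta\bigl(V^{-1}\Delta\Delta V^{-1}\bigr)\not\equiv0$ is fine, and working near infinity is a legitimate alternative to the paper. The paper instead writes down the closed forms $\Delta\Delta V^{-1}=768/\bigl(r_1r_2(r_1+r_2)^5\bigr)$ and $2|R|^2=1536/(r_1+r_2)^6$, with $r_{1,2}=|x\mp e_1|$. It then applies the lemma and evaluates at $x=0$ to get $a_3(0)=-3/8$.

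Your main route, however, never proves the one nontrivial fact it needs: that the coefficient $C'$ of $r^{-6}$ in $|R|^2$ is nonzero. Remark~\ref{rem: asymptotic} only gives the upper bound $|R|^2=O(r^{-6})$. Writing $|R|^2\sim C'r^{-6}$ therefore assumes the conclusion, and the nonvanishing of $\Delta r^{-6}$ is irrelevant until $C'\neq0$ is known.

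The expansion you sketch also points at the wrong order. You expect $\Delta\Delta V^{-1}$ to start at $r^{-5}$, giving $|R|^2\approx r^{-4}(aP_2+b)$, which contradicts the $O(r^{-6})$ bound you just invoked. In fact $\Delta\Delta r=0$ and $\Delta\Delta(r^{-1}P_2)=0$ identically, so the first two terms of $V^{-1}$ contribute nothing. The cancellation you list as a possible obstacle does occur, and resolving it is the actual content of the proof.

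Here is the fix. For $r>1$ you have $V=r^{-1}\sum_{l\ \mathrm{even}}r^{-l}P_l$, so $V^{-1}=r-r^{-1}P_2+r^{-3}(P_2^2-P_4)+O(r^{-5})$. Use $P_2^2=\tfrac15+\tfrac27P_2+\tfrac{18}{35}P_4$ and
$\Delta\Delta(r^aP_l)=\bigl(a(a+1)-l(l+1)\bigr)\bigl((a-1)(a-2)-l(l+1)\bigr)r^{a-4}P_l$.
Then $r^{-3}P_2$ and $r^{-3}P_4$ are biharmonic, and only the spherical average $\tfrac15$ survives:
\[
\Delta\Delta V^{-1}=24r^{-7}+O(r^{-9}),\qquad |R|^2=12r^{-6}+O(r^{-8}),\qquad \Delta|R|^2=360r^{-8}+O(r^{-10}).
\]
This is nonzero for large $r$, and it agrees with the paper's closed form since $r_1+r_2=2r+O(r^{-1})$.

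There are two arithmetic slips. In $\R^3$, $\Delta r^{-6}=30r^{-8}$, not $18r^{-8}$; in $\R^4$, $\Delta\rho^{-12}=120\rho^{-14}$, not $108\rho^{-14}$. Neither changes a sign.

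Your fallback is sound once you import $|R|^2=Ca^8\rho^{-12}$ with $C\neq0$ from the literature. Use the exact Eguchi--Hanson radial Laplacian $\rho^{-3}\partial_\rho\bigl(\rho^3(1-a^4\rho^{-4})\partial_\rho\bigr)$ rather than the flat one. It gives exactly $\Delta^{LB}_g\rho^{-12}=120\rho^{-14}-168a^4\rho^{-18}$, which is not identically zero. It is negative at the bolt $\rho=a$, consistent in sign with the paper's value at $x=0$.

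Compared with the paper, your asymptotic route avoids verifying the closed-form bilaplacian but needs a second-order Legendre expansion with remainder control. The paper's exact formula also yields an explicit value of $a_3$.
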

\begin{proof}
    As in the proof above, we compute the norm of the Riemann tensor for $g_V$ with
    \[
    V(x)=\frac{1}{2|x-e_1|}+\frac{1}{2|x+e_1|}=\frac{1}{2\sqrt{(x_1-1)^2+x_2^2+x_3^2}}+\frac{1}{2\sqrt{(x_1+1)^2+x_2^2+x_3^2}}.
    \]
    The bilaplacian of the inverse of $V$ is
    \[
    \Delta\Delta(V^{-1})=\frac{768}{|x-e_1||x+e_1|(|x-e_1|+|x+e_1|)^5},
    \]
    so that
    \[
    2|R|^2=\frac{1536}{(|x-e_1|+|x+e_1|)^6}
    \]
    and 
    \[
    2\Delta_g^{LB}|R|^2=\frac{36864(-9+5|x|^2+5|x-e_1||x+e_1|)}{(|x-e_1|+|x+e_1|)^9},
    \]
    giving $ 2\Delta^{LB}_g|R|^2|_{0}=-288$, that is $a_3(0)=\frac{1}{384}\Delta_g^{LB}|R|^2=-3/8$.
\end{proof}

\begin{proposition}
    Let $f:\R^3\to\R$ be an harmonic function. Then we have:
    \begin{itemize}
        \item[(i)] $\Delta\big(\frac{1}{f}\big)=2\frac{|\nabla f|^2}{f^3}$;
        \item[(ii)] $\frac{1}{2}\Delta\Delta\big(\frac{1}{f}\big)=\frac{2|D^2f|^2}{f^3}+\frac{12|\nabla f|^4}{f^5}-\frac{12(D^2f)\nabla f\cdot \nabla f}{f^4}$,
    \end{itemize}
    where $\nabla$ denotes the gradient operator and $D^2$ the hessian of the function.
\end{proposition}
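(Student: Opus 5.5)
The proof is a direct computation using only the product and chain rules for the Euclidean Laplacian on $\R^3$, together with $\Delta f=0$. The basic tool is the product formula
\[
\Delta(uv)=u\Delta v+v\Delta u+2\nabla u\cdot\nabla v ,
\]
and the chain rule
\[
\Delta\phi(f)=\phi'(f)\Delta f+\phi''(f)|\nabla f|^2 .
\]
For (i) we take $\phi(s)=s^{-1}$. Then $\phi''(s)=2s^{-3}$, and the term $\phi'(f)\Delta f$ vanishes by harmonicity, which gives $\Delta(1/f)=2|\nabla f|^2/f^3$ at once.

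For (ii) we apply $\Delta$ to $2|\nabla f|^2 f^{-3}$ using the product formula, which produces three contributions.

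First, Bochner's identity on flat $\R^3$ with $\Delta f=0$ gives $\Delta|\nabla f|^2=2|D^2f|^2+2\nabla f\cdot\nabla\Delta f=2|D^2f|^2$.

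Second, the chain rule with $\phi(s)=s^{-3}$ gives $\Delta f^{-3}=12 f^{-5}|\nabla f|^2$, again using $\Delta f=0$.

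Third, for the cross term we use $\nabla|\nabla f|^2=2(D^2f)\nabla f$ and $\nabla f^{-3}=-3f^{-4}\nabla f$. This gives $2\nabla|\nabla f|^2\cdot\nabla f^{-3}=-12 f^{-4}(D^2f)\nabla f\cdot\nabla f$.

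Combining these, we get
\[
\Delta\Delta(1/f)=2\Big(2|D^2f|^2f^{-3}+12|\nabla f|^4f^{-5}-12f^{-4}(D^2f)\nabla f\cdot\nabla f\Big).
\]
Halving this yields exactly formula (ii).

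The only points requiring care are the use of Bochner's formula in flat space with harmonic $f$, and tracking the factor $2$ in the cross term. As a consistency check, the result reproduces the expression $|R|^2=\frac12V^{-1}\Delta\Delta V^{-1}$ of Remark \ref{rem: curvature}: multiplying (ii) by $V^{-1}$ matches the Cartan formula there term by term.
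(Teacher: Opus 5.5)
Your proposal is correct and follows essentially the same route as the paper: (i) comes from the chain/Leibniz rule plus $\Delta f=0$, and (ii) from the product rule for $\Delta$ applied to $|\nabla f|^2 f^{-3}$, using Bochner's identity $\Delta|\nabla f|^2=2|D^2f|^2$, $\Delta f^{-3}=12f^{-5}|\nabla f|^2$ and the cross term $2\nabla|\nabla f|^2\cdot\nabla f^{-3}$. Your constant bookkeeping, including the overall factor $2$ that is halved at the end, checks out.
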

\begin{proof}
    For $(i)$, the gradient of the inverse of $f$ is
    \[
    \nabla\bigg(\frac{1}{f}\bigg)=-\frac{1}{f^2}\nabla(f)
    \]
    and the Leibniz rule give
    \[
    \Delta\bigg(\frac{1}{f}\bigg)=-\nabla\cdot \bigg(\frac{1}{f^2}\nabla(f)\bigg)=-\nabla\bigg(\frac{1}{f^2}\bigg)\cdot \nabla f-\frac{1}{f^2}\Delta f=\frac{2}{f^3}\nabla(f)\cdot \nabla(f)-\frac{1}{f^2}\Delta f,
    \]
    obtaining $(i)$ being $f$ harmonic.
    
    For $(ii)$, using the Leibniz rule for the Laplacian
    \[
    \Delta(fg)=g\Delta f+f\Delta g+2\nabla f\cdot\nabla g
    \]
    with $f=|\nabla f|^2$ and $g=f^{-3}$ and computing first that
    \[
    \nabla(f^n)=nf^{n-1}\nabla f,
    \]
    \[
    \Delta(f^n)=n(n-1)f^{n-2}|\nabla f|^2+nf^{n-1}\Delta f,
    \]
    \[
    \nabla(|\nabla(f)|^2)=2(D^2f)\nabla f,
    \]
    \[
    \Delta(|\nabla f|^2)=2|D^2 f|^2+2\nabla f\cdot \nabla(\Delta f),
    \]
    we obtain $(ii)$ by simplifying with $f$ harmonic. 
\end{proof}

Applying this Proposition to the harmonic function  $f=V$  of the Gibbons--Hawking ansatz, it follows that we can express the norm of the Riemannian tensor as
\begin{equation}\label{eq: curv}
    |R|^2=\frac{1}{2}\frac{1}{V}\Delta\Delta\bigg(\frac{1}{V}\bigg)=\frac{2|D^2V|^2}{V^4}+\frac{12|\nabla V|^4}{V^6}-\frac{12(D^2V)\nabla V\cdot \nabla V}{V^5}.
\end{equation}

The argument underlying Theorem \ref{th: a3 intro} is in fact more general and does not rely essentially on the Gibbons–Hawking structure. The latter is only used to ensure the decay condition
\[
\inf_X ||R||^2=0.
\]
We therefore isolate the following general statement.

\begin{proposition}\label{prop: liouville}
Let $(X,g)$ be a complete Ricci-flat K\"ahler manifold of complex dimension $2$, and assume that
\[
\inf_X ||R||^2=0.
\]
If $a_3\equiv 0$, then $g$ is flat.
\end{proposition}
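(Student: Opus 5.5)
The plan is to turn the vanishing of $a_3$ into a harmonicity statement for $||R||^2$, and then conclude with a Liouville-type theorem on complete manifolds with nonnegative Ricci curvature.

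First, since $g$ is Ricci-flat, the general formula \eqref{coeffespan} reduces to
\[
a_3=\frac{1}{48}\Delta_g||R||^2 ,
\]
as recalled in the Introduction. Hence $a_3\equiv 0$ means $\Delta_g||R||^2\equiv 0$. By the remark on conventions, $\Delta_g^{LB}=2\Delta_g$, so the smooth function $u:=||R||^2$ is harmonic for the Riemannian Laplace--Beltrami operator of the underlying real $4$-manifold $(X,g)$. Moreover $u\ge 0$ everywhere.

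Next I would invoke Yau's Liouville theorem: on a complete Riemannian manifold with ${\rm Ric}\ge 0$, every positive harmonic function is constant. The hypotheses hold here: $(X,g)$ is complete by assumption and Ricci-flat, so in particular ${\rm Ric}\ge 0$. Since $u$ is only nonnegative, I apply the theorem to $u+1$, which is positive and harmonic. It follows that $u+1$, and hence $u$, is constant on the connected manifold $X$. (If $X$ is not assumed connected, the argument applies componentwise, with the infimum condition read on each component.)

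Finally, the constant value of $u$ must equal $\inf_X u=\inf_X||R||^2=0$. Thus $||R||^2\equiv 0$, so $R\equiv 0$ and $g$ is flat. I do not expect a serious obstacle. The only points needing care are the normalizations: the complex versus real Laplacian, and the relation $|R|^2=4||R||^2$. These only change constant factors and do not affect harmonicity or vanishing. One should also check that Yau's theorem needs no bounded-geometry assumption beyond completeness and ${\rm Ric}\ge0$, which is indeed the case.
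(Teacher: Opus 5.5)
Your proposal is correct and matches the paper's proof. Both arguments use $a_3=\frac{1}{48}\Delta_g||R||^2$ to turn $a_3\equiv0$ into harmonicity of $||R||^2$, apply Yau's Liouville theorem to the positive harmonic function $1+||R||^2$, and conclude $||R||^2\equiv0$ from $\inf_X||R||^2=0$; your explicit check that Ricci-flatness supplies the ${\rm Ric}\ge0$ hypothesis of Yau's theorem, which the paper leaves implicit, is a welcome addition.
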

\begin{proof}
    Since $g$ is Ricci-flat and has complex dimension two,
\[
a_3(g)=\frac1{48}\Delta_g||R||^2.
\]
Assume that $a_3(g)\equiv0$. Then
\[
\Delta_g||R||^2=0.
\]
Consequently
\[
u:=1+||R||^2
\]
is a positive harmonic function on a complete Riemannian manifold $(X,g)$ such that $\inf_X ||R||^2=0$. 
 By Yau's Liouville theorem (\cite{Yau}), $||R||^2$ must be constant and thus $||R||^2\equiv0$ and the metric is flat.
\end{proof}

The following two lemmas provide the verification of the assumption $\inf_X ||R||^2=0$ in the Bielawski--Swann class of hyperK\"ahler $4$-manifolds. 
In the first one we give a curvature estimate via the distance from the centres while in the second we prove that this distance is unbounded. 

\begin{lemma} 
Let 
\[
d(x):=\inf_j|x-p_j|
\]
denote the distance from $x$ to the set of centres $p_j$. Then the curvature satisfies 
\[
||R||^2=O(d(x)^{-2}).
\]
\end{lemma}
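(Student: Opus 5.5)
We use formula \eqref{eq: curv}, which expresses $|R|^2$ (and hence $||R||^2=|R|^2/4$) through $V$, $\nabla V$ and $D^2V$:
\[
|R|^2=\frac{2|D^2V|^2}{V^4}+\frac{12|\nabla V|^4}{V^6}-\frac{12(D^2V)\nabla V\cdot \nabla V}{V^5}.
\]
Each term is homogeneous of weight $-2$ once $\nabla$ and $D^2$ are counted as costing one and two inverse powers of length respectively. So the plan is to bound the derivatives of $V$ by powers of $V$ and $d(x)$. We write
\[
V=c+\tfrac12\sum_j |x-p_j|^{-1},
\qquad
|\nabla V|\le \tfrac12\sum_j |x-p_j|^{-2},
\qquad
|D^2V|\le C\sum_j |x-p_j|^{-3},
\]
where $C$ is a universal constant. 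These estimates hold for $N=\infty$ as well, since the series converge locally uniformly away from the centres. Convergence of $\sum_j|x-p_j|^{-1}$ follows from $V(p)<\infty$ at one point together with $|x-p_j|\ge |p-p_j|-|x-p|$.

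The key elementary step is the following. Since $|x-p_j|\ge d(x)$ for every $j$, we get
\[
\sum_j |x-p_j|^{-2}\le d(x)^{-1}\sum_j|x-p_j|^{-1}\le 2d(x)^{-1}V,
\qquad
\sum_j|x-p_j|^{-3}\le 2d(x)^{-2}V.
\]
The constant $c\ge 0$ only increases $V$, so it does not affect these bounds. It follows that
\[
|\nabla V|\le d^{-1}V,
\qquad
|D^2V|\le 2C d^{-2}V.
\]

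Substituting these into the three terms gives
\[
\frac{2|D^2V|^2}{V^4}\lesssim \frac{d^{-4}V^2}{V^4}=d^{-4}V^{-2},
\qquad
\frac{|\nabla V|^4}{V^6}\lesssim d^{-4}V^{-2},
\qquad
\frac{|D^2V||\nabla V|^2}{V^5}\lesssim d^{-4}V^{-2}.
\]
Hence $|R|^2\lesssim d^{-4}V^{-2}$. To convert this into $d^{-2}$ we need the lower bound $V\ge \tfrac12|x-p_{j_0}|^{-1}=\tfrac12 d(x)^{-1}$, where $p_{j_0}$ is a nearest centre. This lower bound holds because the infimum defining $d(x)$ is attained: the centres form a discrete, closed set, which follows from the convergence of $V$. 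Then $V^{-2}\le 4d^2$, and so $|R|^2\lesssim d^{-2}$, which gives the claim $||R||^2=O(d(x)^{-2})$.

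The main obstacle is this last conversion, and in particular the need to avoid any uniform lower bound on $V$ (in the ALE case $c=0$, $V$ decays at infinity). Using the nearest centre resolves it. A secondary point is justifying termwise differentiation of the infinite sum in the AKL case, which needs locally uniform convergence of the differentiated series. That follows from the same comparison with $\sum_j|x-p_j|^{-1}$ on compact sets avoiding the centres.
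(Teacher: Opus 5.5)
Your proposal is correct and is essentially the paper's proof: the same termwise bounds $|\nabla V|\le V/d(x)$ and $|D^2V|\le \sqrt6\,V/d(x)^2$, substitution into \eqref{eq: curv} to get $||R||^2\lesssim V^{-2}d(x)^{-4}$, and then the lower bound $V\ge \frac{1}{2d(x)}$. The technical points you add (termwise differentiation of the infinite series, attainment of the infimum) are left implicit in the paper. The second is not actually needed: $V\ge\frac12|x-p_j|^{-1}$ holds for every $j$, so taking the supremum over $j$ already gives $V\ge \frac{1}{2d(x)}$.
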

\begin{proof}
For 
\[
V(x)=c+\frac12\sum_j\frac1{|x-p_j|}
\]
we have the following estimates:
\[
|\nabla V|\le\frac{V}{d(x)}
\]
and
\[
|D^2V|\le\frac{\sqrt6\,V}{d(x)^2}.
\]
In fact, letting  $r_j=|x-p_j|$,
\[
|\nabla V|
\le\frac12\sum_jr_j^{-2}
\le\frac1{d(x)}\frac12\sum_jr_j^{-1}
\le\frac{V}{d(x)}.
\]
Moreover
\[
\left|D^2\frac1r\right|^2=\frac6{r^6},
\]
and then
\[
|D^2V|
\le\frac{\sqrt6}{2}\sum_jr_j^{-3}
\le\frac{\sqrt6\,V}{d(x)^2}.
\]
By \eqref{eq: curv}, it follows that
\[
||R||^2
\le
\frac{24+12\sqrt6}{V^2d(x)^4}.
\]
If there exists at least one center,
\[
V(x)\ge\frac1{2d(x)},
\]
and hence
\[
||R||^2(x)
\le
\frac{96+48\sqrt6}{d(x)^2}.
\]
\end{proof}

\begin{lemma}  
Assume that there exists a point $q_0\in\mathbb R^3$ such that
\[
\sum_j\frac{1}{|q_0-p_j|}<\infty.
\]
Then the distance function $d$ from the set of centers is unbounded on $\mathbb R^3$. In particular, there exists a sequence $x_\nu\in\mathbb R^3$ such that
\[
d(x_\nu)\longrightarrow\infty,
\]
and hence
\[
\lVert R\rVert^2(x_\nu)\longrightarrow0.
\]
\end{lemma}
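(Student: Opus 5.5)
We argue by contradiction. Suppose $d$ is bounded on $\mathbb R^3$, say $d(x)\le D$ for every $x$. Then every ball $B(x,D+1)$ contains at least one centre. Equivalently, the set of centres is $(D+1)$-dense in $\mathbb R^3$.

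The plan is to show that such a dense configuration forces $\sum_j |q_0-p_j|^{-1}=\infty$, contradicting the hypothesis. The key steps, in order, are as follows.

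\emph{Choosing disjoint balls.} Set $\rho:=D+1$. Take the lattice points $z\in 2\rho\,\mathbb Z^3$. The balls $B(z,\rho)$ are pairwise disjoint, and each contains at least one centre $p_{j(z)}$. Since the balls are disjoint, the map $z\mapsto j(z)$ is injective. Hence
\[
\sum_j\frac{1}{|q_0-p_j|}\ \ge\ \sum_{z\in 2\rho\mathbb Z^3}\frac{1}{|q_0-p_{j(z)}|}.
\]
Any terms with $p_j=q_0$ already give $+\infty$ and may be ignored.

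\emph{Bounding each term from below.} Since $|q_0-p_{j(z)}|\le |q_0-z|+\rho$, the sum on the right is at least
\[
\sum_{z\in 2\rho\mathbb Z^3}\frac{1}{|q_0-z|+\rho}.
\]

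\emph{Divergence.} The number of lattice points $z$ with $R\le |z-q_0|<2R$ is comparable to $R^3/\rho^3$. Each such point contributes a term comparable to $1/R$. So each dyadic shell contributes an amount of order $R^2/\rho^3$, and summing over $R=2^k$ gives divergence, like $\int_{\mathbb R^3}|x|^{-1}\,dx$. This contradicts the hypothesis, so $d$ is unbounded.

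Unboundedness gives a sequence $x_\nu$ with $d(x_\nu)\to\infty$. The preceding Lemma gives $\|R\|^2(x_\nu)\le (96+48\sqrt6)\,d(x_\nu)^{-2}\to0$, provided there is at least one centre. If there are no centres, $V=c$ and the metric is flat, so there is nothing to prove. Strictly speaking, $\|R\|^2$ is evaluated at points of $X$ lying over $x_\nu$, using the $t$-invariance of $\|R\|^2$.

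No step is a real obstacle. The only care needed is the injectivity of the lattice-to-centre assignment, which the disjointness of the balls handles, and the elementary counting estimate in the divergence step.
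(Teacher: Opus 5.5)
Your proof is correct and rests on the same mechanism as the paper's. If $d\le D$ everywhere, the centres must number on the order of $R^3$ in a ball of radius $R$, whereas convergence of $\sum_j|q_0-p_j|^{-1}$ allows only $O(R)$ of them. The paper runs this as a covering argument: the counting bound $N(R)\le AR$ plus a volume comparison between $B_R(0)$ and $\bigcup B_D(p_j)$. You run the dual packing argument with disjoint lattice balls and bound the sum from below directly, which skips the intermediate counting bound and covers the finitely-many-centres case at the same time.
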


\begin{proof}
By translation of coordinates, we can assume $q_0=0$ and set
\[
A:=\sum_j\frac1{|p_j|}<\infty.
\]
Let
\[
N(R):=\#\{j:|p_j|\le R\}.
\]
Since for $|p_j|\le R$ it holds
\[
\frac1{|p_j|}\ge\frac1R,
\]
we have
\[
\frac{N(R)}R
\le
\sum_{|p_j|\le R}\frac1{|p_j|}
\le A,
\]
namely
\begin{equation}\label{eq: N(R)}
    N(R)\le AR.
\end{equation}
Suppose by contradiction that the distance to the set of centers is uniformly bounded:
\[
d(x)\le D
\qquad\text{for every }x\in\mathbb R^3.
\]
Then for every $x\in B_R(0)$ there exists a center $p_j$ such that $|x-p_j|<D$. Consequently,
\[
|p_j|\le |p_j-x|+|x|\le D+R,
\]
and hence
\[
B_R(0)
\subset
\bigcup_{|p_j|\le R+D}B_D(p_j).
\]
By \eqref{eq: N(R)}, the number of balls on the right-hand side is of order $O(R)$. Therefore, the volume of their union is at most $O(R)$,
while
\[
\Vol B_R(0)=O(R^3).
\]
Letting $R\to\infty$ yields a contradiction.
Thus
\[
\sup_{x\in\mathbb R^3}d(x)=+\infty.
\]
Hence, there exists a sequence $x_\nu$ such that
\[
d(x_\nu)\longrightarrow\infty.
\]
By the above Lemma
\[
||R||^2(x_\nu)\longrightarrow0.
\]
In the case of finitely many centers, the same conclusion follows immediately by choosing a sequence tending to infinity.
\end{proof}

We are now in position to prove Theorem \ref{th: a3 intro}.

\begin{proof}[Proof of Theorem \ref{th: a3 intro}]
By the Bielawski--Swann classification, away from the fixed points of the circle action the metric is of Gibbons--Hawking type with
\[
V(x)=c+\frac12\sum_j\frac1{|x-p_j|},
\qquad c\ge0,
\]
where the set of centers is either finite or satisfies the convergence condition
\[
\sum_j|q_0-p_j|^{-1}<\infty
\]
for some $q_0\in\mathbb R^3$.

If there are no centers, the Gibbons--Hawking potential is constant and the metric is already flat. 

Otherwise, if $a_3\equiv0$, the result follows from Proposition \ref{prop: liouville}. 
The converse is immediate.
\end{proof}

\begin{remark}
    We notice that for the complete hyperK\"ahler four manifold with an effective tri-Hamiltonian circle action of finite topological type, the proof of the above Theorem can follow directly by the asymptotic growth of the norm of the curvature tensor given in Remark \ref{rem: asymptotic} applying the same result of Yau (\cite{Yau}).
\end{remark}

We conclude this section giving an explicit computation in the case of  metrics of infinite topological type given in Theorem \ref{th: BS}, namely the Anderson-Kronheimer-LeBrun metrics and their Taub-NUT deformations,  obtained from the Gibbons--Hawking ansatz with
\[
V(x)=\frac{1}{2}\sum_{i=1}^\infty\frac{1}{|x-p_i|},
\]
where $p_i\in\R^3$ are distinct points with the condition that for some point $p_0$ we have $\frac{1}{2}\sum_{i=j}^\infty\frac{1}{|p_0-p_j|}<\infty$. As in \cite{AKL}, we can take the sequence of points $\{p_j\}_{j=1}^\infty$ in $\R^3$ to be $p_j=j^2e_1=(j^2,0,0)$ with $j\ge1$. 
These metrics are no longer ALE.

\begin{proposition}\label{prop: curvature AKL}
    Let $g$ be the metric of Anderson-Kronheimer-LeBrun. Then 
    \[
    |R|^2_{|_0}\neq0.
    \]
\end{proposition}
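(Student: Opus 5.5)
Working at the origin, the point $0$ is not a centre, since $p_j=j^2e_1$ with $j\ge1$. I will use formula \eqref{eq: curv}:
\[
|R|^2=\frac{2|D^2V|^2}{V^4}+\frac{12|\nabla V|^4}{V^6}-\frac{12(D^2V)\nabla V\cdot\nabla V}{V^5},
\]
with $V=\frac12\sum_{j\ge1}|x-p_j|^{-1}$. The series for $V$, $\nabla V$ and $D^2V$ converge locally uniformly near $0$, dominated by $\sum j^{-2}$, $\sum j^{-4}$ and $\sum j^{-6}$ respectively. So I can differentiate termwise and evaluate everything at $0$ in closed form through zeta values.

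The key step is that all centres lie on the $u_1$-axis, so at $0$ the gradient and Hessian are highly symmetric. For a single term $\frac{1}{2|x-p|}$ with $p=ae_1$, $a>0$, I expect:
\[
\nabla\Big(\tfrac{1}{2|x-p|}\Big)\Big|_0=\frac{1}{2a^2}e_1,\qquad D^2\Big(\tfrac{1}{2|x-p|}\Big)\Big|_0=\frac{1}{2a^3}\,\mathrm{diag}(2,-1,-1).
\]
Summing with $a=j^2$ should give
\[
V(0)=\tfrac12\zeta(2),\qquad \nabla V(0)=\tfrac12\zeta(4)\,e_1,\qquad D^2V(0)=\tfrac12\zeta(6)\,\mathrm{diag}(2,-1,-1).
\]
It follows that $|D^2V|^2=\frac{6}{4}\zeta(6)^2$, $|\nabla V|^4=\frac1{16}\zeta(4)^4$ and $(D^2V)\nabla V\cdot\nabla V=\frac{2}{8}\zeta(6)\zeta(4)^2$.

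Substituting, I would write $v=\zeta(2)/2$, $g=\zeta(4)/2$, $h=\zeta(6)/2$. The curvature becomes
\[
|R|^2(0)=\frac{12h^2}{v^4}+\frac{12g^4}{v^6}-\frac{24hg^2}{v^5}=\frac{12}{v^6}\big(hv-g^2\big)^2 .
\]
So the whole statement reduces to checking $hv\neq g^2$, that is, $\zeta(2)\zeta(6)\neq\zeta(4)^2$.

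This nonvanishing is the only real obstacle, and I would handle it in two ways. Explicitly, $\zeta(2)=\pi^2/6$, $\zeta(4)=\pi^4/90$ and $\zeta(6)=\pi^6/945$. Then $\zeta(2)\zeta(6)=\pi^8/5670$ while $\zeta(4)^2=\pi^8/8100$, so they differ. Alternatively, Cauchy--Schwarz gives $\zeta(4)^2=\big(\sum j^{-1}j^{-3}\big)^2\le\zeta(2)\zeta(6)$, with strict inequality because the sequences $j^{-1}$ and $j^{-3}$ are not proportional. Either way $|R|^2(0)>0$, which proves the proposition.
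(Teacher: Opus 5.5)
Your proposal is correct and follows the paper's route: evaluate \eqref{eq: curv} at the origin, where the gradient and Hessian of $V$ are $\tfrac12\zeta(4)\,e_1$ and $\tfrac12\zeta(6)\,\mathrm{diag}(2,-1,-1)$. The difference is only in how the result is assembled. You factor it as $\frac{12}{v^6}(hv-g^2)^2$, so positivity reduces to $\zeta(2)\zeta(6)\neq\zeta(4)^2$, which follows from strict Cauchy--Schwarz. The paper instead sums the three terms numerically to get $\frac{192\pi^4}{30625}$, which is exactly the value of your square.
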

\begin{proof}
    Let $f=\frac{1}{2|x-j^2e_1|}=\frac{1}{2\sqrt{(x-j^2)^2+y^2+z^2}}$. We compute that
    \[
    f(0)=\frac{1}{2j^2},\quad \partial_x f_{|_0}=\frac{1}{2j^4},\quad
    \partial^2_x f_{|_0}=\frac{1}{j^6},\quad \partial_y f_{|_0}=\partial_z f_{|_0}=0,\quad \partial^2_y f_{|_0}=\partial^2_z f_{|_0}=-\frac{1}{2j^6},\quad \partial_{xy}f_{|_0}=0.
    \]
    The operator $\nabla$ and $D^2$ are linear. Thus we have
    \[
    \frac{12|\nabla V|^4}{V^6}|_0=\frac{12((1/2)+(1/32)+(1/162)+\dots)^4}{(1/2+1/8+1/18+\dots)^6}=\frac{12(\sum_{j=1}^\infty \frac{1}{2j^4})^4}{(\sum_{j=1}^\infty \frac{1}{2j^2})^6}=\frac{\frac{3}{4}(\zeta(4))^4}{(\frac{\pi^2}{2\cdot6})^6}=\frac{\frac{3}{4}\cdot (\frac{\pi^4}{90})^4}{(\frac{\pi^2}{2\cdot6})^6}=\frac{64\pi^4}{1875};
    \]
     \[
    \frac{2|D^2V|^2}{V^4}|_0=\frac{2((\partial^2_x V)^2+(\partial_y^2 V)^2+(\partial_z^2 V)^2)}{(\sum_{j=1}^\infty \frac{1}{2j^2})^4}=\frac{2((\sum_{j=1}^\infty \frac{1}{j^6})^2+2(\sum_{j=1}^\infty\frac{1}{2j^6})^2)}{(\frac{\pi^2}{2\cdot6})^4}=\frac{2(\zeta(6)^2+\frac{1}{2}\zeta(6)^2)}{(\frac{\pi^2}{2\cdot6})^4}=\frac{3\zeta(6)^2}{(\frac{\zeta(2)}{2})^4}
    \]
    \[
    \frac{12(D^2V)\nabla V\cdot \nabla V}{V^5}|_0=\frac{12(\partial^2_x V(\partial_x V)^2)}{(\sum_{j=1}^\infty \frac{1}{2j^2})^5}=\frac{12(\sum_{j=1}^\infty \frac{1}{j^6})(\sum_{j=1}^\infty\frac{1}{2j^4})^2}{(\frac{\pi^2}{2\cdot6})^5}=\frac{3\zeta(6)\zeta(4)^2}{(\frac{\zeta(2)}{2})^5}=\frac{256\pi^4}{2625};
    \]
    where $\zeta(n)$ is the Riemann zeta function in $n$. Thus,
    \[
    |R|^2_{|_0}=\frac{64\pi^4}{1875}+\frac{256\pi^4}{3675}-\frac{256\pi^4}{2625}=\frac{192 \pi ^4}{30625}
    %\frac{64 \left(149 \pi ^4-12600\right)}{91875}\sim 1.33326.
    \]

\end{proof}

\begin{corollary}\label{cor: curvature AKl def}
    The Taub-NUT deformations of the Anderson-Kronheimer-LeBrun metric arising from 
    \[
    V(x)=c+\frac{1}{2}\sum_{j=1}^\infty \frac{1}{|x-p_j|}
    \]
    has not vanishing curvature in zero.
\end{corollary}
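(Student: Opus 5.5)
The plan is to repeat the evaluation at the origin carried out in Proposition \ref{prop: curvature AKL}, now with $V_c:=c+V_0$, where $V_0=\frac12\sum_{j\ge1}\frac{1}{|x-j^2e_1|}$ is the Anderson--Kronheimer--LeBrun potential. Adding a constant does not change the derivatives: $\nabla V_c=\nabla V_0$ and $D^2V_c=D^2V_0$. So the three quantities
\[
a:=|\nabla V_0|^2|_0=\Big(\tfrac{\zeta(4)}{2}\Big)^2,\qquad b:=|D^2V_0|^2|_0=\tfrac32\zeta(6)^2,\qquad m:=(D^2V_0)\nabla V_0\cdot\nabla V_0|_0=\zeta(6)\Big(\tfrac{\zeta(4)}{2}\Big)^2
\]
are exactly those already computed in the proof of Proposition \ref{prop: curvature AKL}. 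Only the value of the potential changes, from $v_0:=V_0(0)=\zeta(2)/2$ to $v:=c+v_0>v_0$.

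By \eqref{eq: curv}, at the origin
\[
|R|^2_{|_0}=\frac{1}{v^6}\Big(2b\,v^2-12m\,v+12a^2\Big)=:\frac{P(v)}{v^6},
\]
where $P$ is a quadratic polynomial in $v$ with positive leading coefficient. So it suffices to show that $P(v)\neq0$ for every $v\ge v_0$ (the endpoint $v=v_0$ is just $c=0$, already covered by Proposition \ref{prop: curvature AKL}).

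The cleanest route is to show that the discriminant $144m^2-96\,b\,a^2$ is negative, since then $P>0$ for all real $v$ and the corollary holds for every $c\ge0$ at once. Substituting the values above reduces this to $144\zeta(6)^2\tilde a^2<144\zeta(6)^2\tilde a^2$ with $\tilde a=(\zeta(4)/2)^2$, i.e. the discriminant equals $144\tilde a^2\zeta(6)^2-144\tilde a^2\zeta(6)^2=0$. The expected main obstacle is precisely this borderline case: $P$ is a perfect square $2b\,(v-v_*)^2$, with double root $v_*=3m/b=2\tilde a/\zeta(6)$. Then $P\ge0$, and I must check that $v_*$ is not attained for $v\ge v_0$. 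Numerically, $v_*=\zeta(4)^2/(2\zeta(6))\approx0.5585$, while $v_0=\zeta(2)/2\approx0.8225$. Hence $v_*<v_0\le v$, so $P(v)\ge P(v_0)>0$ because $P$ is increasing past its root. This is consistent with the positive value $|R|^2_{|_0}$ found in Proposition \ref{prop: curvature AKL}.

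The conclusion is that $|R|^2_{|_0}=2b(v-v_*)^2/v^6>0$ for all $c\ge0$. I will write the final step carefully using the exact values $\zeta(2)=\pi^2/6$, $\zeta(4)=\pi^4/90$, $\zeta(6)=\pi^6/945$ to certify the inequality $v_*<v_0$ rigorously, rather than relying only on the decimal approximations.
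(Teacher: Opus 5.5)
Your proposal is correct and follows the paper's route. The paper substitutes $V_c=c+V_0$ into \eqref{eq: curv}, with $\nabla V$ and $D^2V$ unchanged, and obtains $|R_c|^2_{|_0}=\frac{192\pi^{12}(40c+\pi^2)^2}{30625(12c+\pi^2)^6}$; this is exactly your $2b(v-v_*)^2/v^6$ with $v_*=\zeta(4)^2/(2\zeta(6))=7\pi^2/120$ (one small numerical slip: $v_*\approx0.576$, not $0.5585$, but the exact comparison $7\pi^2/120<\pi^2/12$ settles the argument).
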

\begin{proof}
    Being $\nabla, D^2$ differential operators, adding a positive constant $c$ to $V$, affects only the value of the denominators in \eqref{eq: curv}. In particular we get
    \begin{equation*} 
    \begin{split}
    |R_c|^2_{|_0}&=\frac{\frac{3}{4}\zeta(4)^4}{(c+\frac{\xi(2)}{2})^6}+\frac{3\zeta(6)^2}{(c+\frac{\zeta(2)}{2})^4}-\frac{3\zeta(6)\zeta(4)^2}{(c+\frac{\zeta(2)}{2})^5}\\
    &=\frac{192 \pi ^{12} \left(40 c+\pi ^2\right)^2}{30625 \left(12 c+\pi ^2\right)^6}.
    \end{split}
     \end{equation*}
     Moreover we observe that $|R_c|^2\to0$ as $c$ grows.
\end{proof}

Now, as an explicit example, we construct in this case a sequence of points satisfying the hypothesis $\inf_X ||R||^2=0$ of Proposition \ref{prop: liouville}: let 
    \[
    V_c(x)=c+\frac12\sum_{j=1}^\infty \frac1{|x-j^2e_1|}
    \]
    be the harmonic function that give rise to the infinite topological type metrics. 
    From Proposition \ref{prop: curvature AKL} and Corollary \ref{cor: curvature AKl def} we have that $|R_c|^2$ is positive in zero for every $c\ge0$. 

    We exhibit a sequence $q_R$ escaping to infinity along which $|R_c|(q_R)\to0$ as $R\to\infty$. This will conclude the proof by the result of Yau (\cite{Yau}) as in Theorem \ref{th: a3 intro}.

    Let $q_R=(0,R,0)\in\R^3$. We have that
    \[
    V_0(q_R)=\frac{1}{2}\sum_{j=1}^\infty \frac{1}{\sqrt{j^4+R^2}},
    \]
    \[
    \nabla V_0(q_R)=\bigg(\frac{1}{2}\sum_{j=1}^\infty \frac{j^2}{(j^4+R^2)^{3/2}},-\frac{1}{2}\sum_{j=1}^\infty \frac{R}{(j^4+R^2)^{3/2}},0\bigg),
    \]
    \[
    D^2V_0(q_R)=\begin{pmatrix}
    \sum_{j=1}^\infty \frac{2j^4-R^2}{2(j^4+R^2)^{5/2}} &
   - \sum_{j=1}^\infty \frac{3j^2R}{2(j^4+R^2)^{5/2}}&0\\
    - \sum_{j=1}^\infty \frac{3j^2R}{2(j^4+R^2)^{5/2}}& 
     -\sum_{j=1}^\infty \frac{j^4-2R^2}{2(j^4+R^2)^{5/2}} &0\\
     0&0&
    -\sum_{j=1}^\infty \frac{1}{2(j^4+R^2)^{3/2}} 
    \end{pmatrix}.
    \]
    Set $h=R^{-\frac{1}{2}}$ and rewrite the sums defining $V_0(q_R)$ and its derivatives as Riemann sums in the variable 
    $t = jh$. Since the corresponding rescaled functions are integrable on $(0,\infty)$, one obtains 
    \[
    V_0(q_R)\asymp R^{-1/2},  
    \]
    \[
    \frac{1}{2}\sum_{j=1}^\infty \frac{j^2}{(j^4+R^2)^{3/2}}\asymp -\frac{1}{2}\sum_{j=1}^\infty \frac{R}{(j^4+R^2)^{3/2}}\asymp R^{-3/2},
    \]
    \[
    \sum_{j=1}^\infty \frac{2j^4-R^2}{2(j^4+R^2)^{5/2}}\asymp
   - \sum_{j=1}^\infty \frac{3j^2R}{2(j^4+R^2)^{5/2}}\asymp
   -\sum_{j=1}^\infty \frac{j^4-2R^2}{2(j^4+R^2)^{5/2}}\asymp
   -\sum_{j=1}^\infty \frac{1}{2(j^4+R^2)^{3/2}}\asymp R^{-5/2},
    \]       
    where we write $a\asymp b$ if $a$ and $b$ are comparable up to positive multiplicative constants, i.e., if there exist $C_1,C_2>0$ such that
    \[
    C_1b\leq a\leq C_2b,
    \]    
    and it follows that
\[
|\nabla V_0(q_R)|=O(R^{-3/2}),\quad
|D^2V_0(q_R)|=O(R^{-5/2}),\quad (D^2V)\nabla V\cdot\nabla V= O(R^{-11/2}).
\]
Thus by \eqref{eq: curv} we obtain that $|R_0|^2(q_R)=O(R^{-3})$, that is $|R_0|^2\to 0$.

Now, if $c>0$ then $V_c(q_R)\to c$, while $\nabla V_c(q_R)\to 0$ and $D^2V_c(q_R)\to 0$, so that $|R_c|^2(q_R)\to0$.


\begin{thebibliography}{99}

\bibitem{AKL} Anderson, M.T., Kronheimer, P.B., LeBrun, C. Complete Ricci-flat Kähler manifolds of infinite topological type. Commun.Math. Phys. 125, 637–642 (1989). https://doi.org/10.1007/BF01228345.

\bibitem{ALZ} C.Arezzo, A.Loi, F.Zuddas, Szeg\"o kernel,regular quantizations and spherical CR-structures, {\em Math.Z.} {\bf275} (2013), 1207-1216.

\bibitem{arezzoloi} C. Arezzo, A. Loi, Moment maps, scalar curvature and quantization of K\"ahler manifolds, {\em Comm. Math. Phys.} {\bf243} (2004) 543–559.

\bibitem{BW} Bena, I. and. Warner, N.P (2008) Black Holes, Black Rings, and Their Microstates. In: Bellucci, S., Ed., Supersymmetric Mechanics-Vol. 3. Lecture Notes in Physics, Vol. 755, Springer, Berlin, 1-92.

\bibitem{B}  R. Bielawski, Complete hyper-Kähler 4n-manifolds with a local tri-hamilton
ian Rn-action, Math. Ann. 314 (1999), no. 3, 505–528

\bibitem{cgr1} M. Cahen, S. Gutt, J. H. Rawnsley,
{\em Quantization of K\"{a}hler manifolds I: Geometric
interpretation of Berezin's quantization}, JGP. 7 (1990), 45-62.

\bibitem{aghedu} F. Cannas Aghedu, On the balanced condition for the Eguchi-Hanson metric, J. Geom. Phys. 137 (2019), 35-39.

\bibitem{catlin} D. Catlin, The Bergman kernel and a theorem of Tian, in Analysis and geometry in several
complex variables (Katata, 1997), Trends Math., Birkhuser Boston, Boston, MA (1999), 1-23

\bibitem{CZ}  Cristofori, Simone; Zedda, Michela. On the third coefficient in the TYCZ–expansion of the epsilon function of Kähler–Einstein manifolds / - In: JOURNAL OF GEOMETRY AND PHYSICS. - ISSN 0393-0440. - (2025).

\bibitem{CZ1} Cristofori, S., Zedda, M., Kähler Geometry of Scalar Flat Metrics on Line Bundles Over Polarized Kähler–Einstein Manifolds /  - In: THE JOURNAL OF GEOMETRIC ANALYSIS. - ISSN 1050-6926. - 34:6(2024). [10.1007/s12220-024-01590-0]

\bibitem{D} Dunajski, M. (2025). Gravitational Instantons, Old and New. Acta Physica Polonica B, 55, Article 12. https://doi.org/10.5506/aphyspolb.55.12-a3

\bibitem{EH} Eguchi, T., Hanson A. J. Self-dual solutions to Euclidean gravity. Ann. Physics 120 no.
1, 82-106 (1979)

\bibitem{engliscoeff} M. Engli\v{s}, The asymptotics of a Laplace integral on a K\"ahler manifold. {\em J. Reine Angew. Math.} {\bf528}, 1--39 (2000).

\bibitem{FT} Z.Feng, Z.Tu, On canonical metrics on Cartan-Hartogs domains, Math. Z. {\bf278} (1-2)(2014), 301-320.

\bibitem{GH}  Gibbons, G. W. and Hawking, S. W. (1978) Gravitational Multi- Instantons. Phys. Lett. B78 430.

\bibitem{Gross-Wilson} Gross, Mark and Pelham M. H. Wilson. “Large Complex Structure Limits of K3 Surfaces.” Journal of Differential Geometry 55 (2000): 475-546. 

\bibitem{K} Kronheimer, P.B. The construction of ALE spaces as hyper-Kähler quotients. J. Differ. Geom. 1989, 29, 665–683, https://doi.org/10.4310/jdg/1214443066.

\bibitem{LB}  C. LeBrun, Complete Ricci-flat K¨ahler metrics on Cn need not be flat, Proceedings
of Symposia in Pure Mathematics, vol. 52 (1991), Part 2, 297-304.

\bibitem{TwoConjectures} A. Loi, F. Salis, F. Zuddas, Two conjectures on Ricci--flat K\"ahler metrics, Math. Zeit., {\bf290} (2018), 599--613.

\bibitem{LUZ} A. Loi, D. Uccheddu, M. Zedda, {On the Szeg\"o kernel of Cartan-Hartogs domains},  {\em Arkiv f\"or Matematik} {\bf54} (2016), n. 2, 473-484.

\bibitem{LZmama} A. Loi, M. Zedda, On the coefficients of TYZ expansion of locally Hermitian symmetric spaces, {\em Manuscripta Math.} {\bf148} (2015), 303–315.


\bibitem{LZZ} Loi, Andrea, Michela Zedda and Fabio Zuddas. “Some remarks on the Kaehler geometry of LeBrun's Ricci flat metrics on $\C^2$.” arXiv: Differential Geometry (2011): n. pag.



\bibitem{lu} Z. Lu, On the lower terms of the asymptotic expansion of Tian-Yau-Zelditch. {\em Am. J. Math.} {\bf122}, 235-273 (2000).

\bibitem{LT} Z. Lu, G. Tian, The Log Term of the Szeg\"o Kernel, Duke Math. J. {\bf 125} (2004), no.2, 351-387.


\bibitem{mamarinescu} X. Ma, G. Marinescu, Holomorphic morse inequalities and Bergman kernels. Progress in Mathematics, Birkhäuser, Basel (2007).



\bibitem{Swann}Swann, Andrew. “Twists versus Modifications.” arXiv: Differential Geometry (2015).

\bibitem{Yau}  T. Yau Harmonic functions on complete Riemannian manifolds, Comm. Pure Appl. Math.
28 (1975), 201-228.

\bibitem{Z} M. Zedda, Canonical metrics on Cartan-Hartogs domains, Int. J. Geom. Methods Mod. Phys. {\bf9} (1) (2012).

\bibitem{zelditch} S. Zelditch, Szeg\"o kernels and a theorem of Tian, {\em Int. Math. Res. Notices} {\bf6}, 317--331 (1998).


\end{thebibliography}
\end{document}